\newtheorem{thm}{Theorem}[section]
\newtheorem{rem}[thm]{Remark}
\newtheorem{ex}[thm]{Example}
\newtheorem{lem}[thm]{Lemma}
\newtheorem{prop}[thm]{Proposition}
\theoremstyle{definition}
\renewcommand{\Bbb}{\mathbb}
\def\NZQ{\Bbb}
\def\NN{{\NZQ N}}
\def\QQ{{\NZQ Q}}
\def\ZZ{{\NZQ Z}}
\def\ker{{\text{Ker }}}
\def\ima{{\text{Im }}}
\def\rank{{\text{rank }}}
\def\M{{\mathcal M}}
\def\bl{\ | \ }
\DeclareMathOperator{\coker}{coker}
\title{A splitting property of the chromatic homology of the complete graph}
\author{So Yamagata}
\address{Department of Applied Mathematics, Faculty of Science, Fukuoka University, Fukuoka, 814-0180, Japan.}
\email{so.yamagata@fukuoka-u.ac.jp}
\keywords{chromatic homology, chromatic polynomial, categorification, complete graph}
\subjclass{57M15, 57M27, 05C15}
\begin{document}
\begin{abstract}
    Khovanov \cite{K} introduced a bigraded cohomology theory of links whose graded Euler characteristic is the Jones polynomial. The theory was subsequently applied to the chromatic polynomial of graph \cite{HY}, resulting in a categorification known as the ``chromatic homology''. Much as in the Khovanov homology, the chromatic polynomial can be obtained by taking the Euler characteristic of the chromatic homology. In the present paper, we introduce a combinatorial description of enhanced states that can be applied to analysis of the homology in an explicit way by hand. Using the new combinatorial description, we show a splitting property of the chromatic homology for a certain class of graphs. Finally, as an application of the description, we compute the chromatic homology of the complete graph.
\end{abstract}

\maketitle
\section{Introduction}
Khovanov \cite{K} introduced a bigraded cohomology theory of links whose graded Euler characteristic is the Jones polynomial. The theory was subsequently applied to the chromatic polynomial of graph \cite{HY}, resulting in a categorification known as the ``chromatic homology''. Much as in the Khovanov homology, the chromatic polynomial can be obtained by taking the Euler characteristic of the chromatic homology. Several results on the chromatic homology have been obtained. In 2006, Helme-Guizon et al. \cite{H-GPR} studied torsions in the chromatic homology and presented a vanishing theorem of the homology based on their results. Specifically, they determined which graphs have the homology that contains torsion. They also proved a thickness-type theorem for the homology groups, and gave computations of the homology of polygon graphs with coefficients in the general algebra. A study by Lawrance and Sazdanovic \cite{LS} showed that the torsion of the chromatic homology is of order two. The first group of the homology was studied by Pabiniak et al. \cite{PPS}, and they also gave many interesting conjecture about the homology with algebras other than $\mathcal{A}_2 = \mathbb{Z}/(x^2)$. Helme-Guizon and colleagues \cite{CCR} showed that the chromatic homology with a rational coefficient can be determined by the chromatic polynomial, proving that the homologies of the ``knight'' pair are isomorphic. In 2018, Sazdanovic and Scofield \cite{SS} studied the span of the homology and considered how the homology changes when a cycle graph is added to the given graph. The chromatic homology with arbitral algebra was observed in a study by Helme-Guizon and Rong \cite{HR}. Providing another perspective, homology theories for the chromatic polynomial have also been observed \cite{CS}, \cite{Z}.

The chromatic homology is interesting not only in itself but also in relation to other areas of study. The relation to Hochschild homology was investigated by Przytycki \cite{P}, who showed that the Hochschild homology of the unital algebra is isomorphic to the chromatic homology over the algebra of a cycle graph. With respect to the topology of configuration spaces, Baranovsky and Sazdanovic \cite{BS} showed that the $E_1$-term of the Bendersky-Gitler-type spectral sequence converging to the homology of the graph configuration space is given by the chromatic complex. B\"{o}kstedt and Minuz \cite{BM} subsequently studied the relation between the work of Baranovsky and Sazdanovic \cite{BS} and Kriz's rational model for the configuration space \cite{Kr}.

There are also variants of the chromatic homology. In an analysis by Jasso-Hernandez and Rong \cite{JH-R}, the Tutte homology was provided as a categorification of the Tutte polynomial. The categorification of the chromatic polynomial of embedded graphs was studied by Loebl and Moffatt \cite{LM}. The categorification of the Stanley's chromatic symmetric function was introduced by Sazdanovic and Yip \cite{SY}. As an analogy of the chromatic homology, Dancsco and Licata \cite{DL} provided several homology theories for hyperplane arrangement as a categorification of several polynomials associated with the combinatorics of hyperplane arrangement. In particular, it is easily seen that the characteristic homology, a categorification of the characteristic polynomial, of the braid arrangement is isomorphic to the chromatic homology of the complete graph. 

In the present paper, we introduce a combinatorial description of enhanced states which would be useful to analyze the homology in an explicit way by hand. Using the description we show a splitting property of the chromatic homology for a certain class of graphs. Let $K_{n-1}^m$ be a graph obtained by adding $m$ edges connecting a single vertex with each vertex of the complete graph $K_{n-1}$ and $e$ be one of the connecting edges (see Subsection \ref{subsec:combi_descr}). Then, we show the following theorem.
\begin{thm}[Theorem \ref{thm:main_split}]
    For $n \geq 4$ let $G = K_{n-1}^m$ and $e$ be an edge chosen as above. Then, we have the following split exact sequence
    \begin{equation}
        0 \to H^{i,j}(G/e) \to H^{i+1, j} (G) \to H^{i+1,j} (G - e) \to 0
    \end{equation}
    for all $i,j$ with $i + j = n - 1, n$. \\
    If we sum over $j$, we have the split exact sequence
    \begin{equation}
        0 \to H^{i}(G/e) \to H^{i+1} (G) \to H^{i+1} (G - e) \to 0
    \end{equation}
    for all $i$.
\end{thm}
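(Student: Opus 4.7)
The plan is to derive the split short exact sequence from the long exact sequence in chromatic homology induced by deletion--contraction. For any edge $e$ of a graph $G$ there is a short exact sequence of cochain complexes
\[
0 \to C^{\bullet}(G/e) \to C^{\bullet+1}(G) \to C^{\bullet+1}(G-e) \to 0
\]
respecting the internal grading, whose associated long exact sequence takes the form
\[
\cdots \to H^{i,j}(G/e) \to H^{i+1,j}(G) \to H^{i+1,j}(G-e) \xrightarrow{\delta} H^{i+1,j}(G/e) \to \cdots .
\]
To prove the theorem it is enough to show (i) that the connecting homomorphism $\delta$ vanishes on $H^{i,j}(G-e)$ and on $H^{i+1,j}(G-e)$ whenever $i+j \in \{n-1,n\}$, which extracts the short exact sequence in the statement from the long exact sequence, and (ii) that this sequence splits. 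Summing over $j$ then yields the ungraded version at once.

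For step (i), I would apply the combinatorial description of enhanced states introduced earlier in the paper to $G = K_{n-1}^m$, $G/e$, and $G-e$. Because the algebra $\mathcal{A}_2 = \mathbb{Z}[x]/(x^2)$ has only labels of degree $0$ and $1$, the enhanced states that contribute to bidegree $(i,j)$ near the top diagonals $i+j=n-1,n$ are forced to label almost every connected component by $x$, leaving very little freedom. Representing a cocycle $\alpha \in C^{i+1,j}(G-e)$ by such a constrained enhanced state, the connecting map $\delta$ is computed by choosing a lift $\tilde\alpha \in C^{i+1,j}(G)$ and then reading off the part of $d\tilde\alpha$ lying in the image of $C^{i,j}(G/e) \to C^{i+1,j}(G)$. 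The technical assertion to prove is that, for the edge $e$ chosen as in the theorem and under the hypothesis $n\geq 4$, this part is always a coboundary in $C(G/e)$, so that its class in $H^{i,j}(G/e)$ is zero; the bound $n\geq 4$ is presumably needed to rule out the small-graph coincidences where top-degree cocycles fail to lift.

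The main obstacle, and the step I expect to dominate the proof, is precisely this vanishing of $\delta$. One has to work through the enhanced states in the top two diagonals systematically, exploiting the fact that $G/e$ has essentially the same combinatorial skeleton as $K_{n-1}$ (with the multi-edges arising from contraction handled by the combinatorial description) and $G-e$ decomposes into the complete graph $K_{n-1}$ together with a smaller fan, and then to verify that every contribution of the added edge $e$ to the differential either cancels on the nose or is a boundary in $C(G/e)$. Once $\delta=0$ is in hand, step (ii) follows by freeness of the quotient: by the Lawrance--Sazdanovic torsion theorem the only possible torsion in chromatic homology over $\mathcal{A}_2$ is of order two, and the explicit description of the generators in the top diagonals rules it out for $H^{i+1,j}(G-e)$, so this group is $\mathbb{Z}$-free and the short exact sequence splits. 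Summing over $j$ then delivers the second assertion.
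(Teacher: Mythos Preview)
Your plan for step~(i) is in the right spirit and close to what the paper does. The paper shows the connecting map vanishes by taking each explicit generator $z$ of $\ker\partial^{i,j}_{G-e}$ supplied by Propositions~3.1 and~3.2 and observing that the obvious lift $z+w\in C^{i,j}(G)$, obtained by adding back the single missing term involving the edge $e$, is already a cocycle in $G$ (again by the description in Proposition~3.1); hence $\gamma(z)=0$ on the nose, not merely up to a coboundary. Your description is vaguer but points in the same direction.

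Step~(ii), however, contains a genuine gap. You claim that $H^{i+1,j}(G-e)$ is $\ZZ$-free because ``the explicit description of the generators in the top diagonals rules out'' torsion. This is false on the top diagonal $(i{+}1)+j=n$: the graph $G-e=K_{n-1}^{m-1}$ contains a cycle for $n\ge 4$, and any graph containing a cycle has $2$-torsion in its chromatic homology over $\mathcal{A}_2$. Knowing generators of the \emph{cocycle} group says nothing about torsion in the quotient by coboundaries, and the Lowrance--Sazdanovic theorem only bounds the order of the torsion, it does not eliminate it. The paper does \emph{not} argue freeness on that diagonal. It uses torsion-freeness (via Lemma~2.4) only on the lower diagonal, where the quotient genuinely is free; on the top diagonal it instead invokes the splitting result of Sazdanovic--Scofield (Lemma~2.3 here, Lemma~18 of~[SS]) to handle homological degree~$\ge 2$, and for the one remaining low-degree case it constructs by hand a chain-level map $\varphi^{1,n-1}\colon C^{1,n-1}(G)\to C^{0,n-1}(G/e)$ and checks that it descends to a section on homology. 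Without a replacement for that explicit section (and for the cited [SS] lemma), your splitting argument fails precisely where the torsion lives.
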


This result would allow us to compute the chromatic homology in an inductive way for the graph $K_{n-1}^m$. Actually, as an application of the theorem, we can describe the chromatic homology of the complete graph recursively. The description of the homology was firstly conjectured by Hasegawa and the author \cite{H}.
\begin{thm}[Conjecture 6.8 \cite{H}]\label{thm:complete}
    For $n \geq 4$ the chromatic homology groups of a complete graph $K_n$ with $n$ vertices are given as
    \begin{equation}
        H^i (K_n) =
        \begin{cases}
            \ZZ \{ n \}                                                  & i = 0             \\
            H^{i-1} (K_{n-1})^{\oplus (n-2)} \oplus H^i(K_{n-1}) \{ 1 \} & 1 \leq i \leq n-2 \\
            0                                                            & i \geq n-1.
        \end{cases}
    \end{equation}
\end{thm}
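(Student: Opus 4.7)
The plan is to realize $K_n \cong K_{n-1}^{n-1}$ (choose any vertex of $K_n$ to play the role of the apex attached to the complete graph on the remaining $n-1$ vertices) and to iteratively peel off connecting edges using Theorem~\ref{thm:main_split}. First I would verify that for each $m$ with $1 \le m \le n-1$ and any connecting edge $e$ of $K_{n-1}^m$, one has $K_{n-1}^m / e \cong K_{n-1}$ (after collapsing the multi-edges that appear when $m \ge 2$) and $K_{n-1}^m - e \cong K_{n-1}^{m-1}$. The summed form of Theorem~\ref{thm:main_split} then yields
\begin{equation*}
    H^{i+1}(K_{n-1}^m) \;\cong\; H^i(K_{n-1}) \,\oplus\, H^{i+1}(K_{n-1}^{m-1}),
\end{equation*}
and iterating this identity $n-2$ times, from $m = n-1$ down to $m = 1$, gives
\begin{equation*}
    H^i(K_n) \;\cong\; H^{i-1}(K_{n-1})^{\oplus(n-2)} \,\oplus\, H^i(K_{n-1}^1).
\end{equation*}

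The second ingredient is the pendant-edge identity $H^*(K_{n-1}^1) = H^*(K_{n-1})\{1\}$: attaching a single pendant edge to $K_{n-1}$ shifts its chromatic homology by $1$ in the internal grading. I would not try to obtain this by pushing the splitting one more step down to $K_{n-1}^0 = K_{n-1} \sqcup \{\ast\}$, since that would introduce the full K\"unneth contribution $H^*(K_{n-1}) \otimes \mathcal{A}_2$ and overshoot. Instead I would argue at the chain level: decomposing $C^*(K_{n-1}^1)$ according to whether the pendant edge lies in the state presents the complex as the mapping cone of a merge map $\varphi\colon C^*(K_{n-1}) \otimes \mathcal{A}_2 \to C^*(K_{n-1})$ that multiplies the pendant tensor factor into its neighbour's factor. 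The induced $\varphi_*$ sends $h \otimes 1$ to $h$ and is therefore surjective, so the long exact sequence of the cone collapses to isomorphisms $H^i(K_{n-1}^1) \cong \ker \varphi_*$; a direct identification then gives $\ker \varphi_* \cong H^i(K_{n-1})\{1\}$ as a graded module.

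Combining these two steps yields the claimed recursion for $1 \le i \le n-2$. The remaining cases are $H^i(K_n) = 0$ for $i \ge n-1$, which follows from standard thickness/vanishing results for chromatic homology of a connected graph on $n$ vertices, and $H^0(K_n) = \ZZ\{n\}$, which follows from a direct computation of $\bigcap_{e \in E(K_n)} \ker d_e \subseteq \mathcal{A}_2^{\otimes n}$: the $\binom{n}{2}$ multiplication constraints force every tensor factor to equal $x$, so only $x^{\otimes n}$ survives. The main obstacle will be the pendant-edge identity, which is not a direct consequence of Theorem~\ref{thm:main_split} and requires the chain-level cone analysis above; a secondary point of care is bookkeeping the internal grading through the $n-2$ iterations so that each peeled-off copy of $H^{i-1}(K_{n-1})$ appears with no shift, matching the stated form of the recursion.
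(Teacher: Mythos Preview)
Your proposal is correct and follows essentially the same route as the paper: iterate the split short exact sequence of Theorem~\ref{thm:main_split} from $K_{n-1}^{n-1}=K_n$ down to $K_{n-1}^{1}$, then invoke the pendant-edge identity $H^*(K_{n-1}^1)\cong H^*(K_{n-1})\{1\}$ and handle $i=0$ and $i\ge n-1$ separately. The only cosmetic differences are that the paper treats the pendant-edge identity as a known fact rather than giving your cone argument, and for $i\ge n-1$ it runs an explicit induction through the long exact sequence instead of appealing directly to the thickness bound (which is Lemma~\ref{lem:quote} here and gives the vanishing immediately, as you suggest).
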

Remark that Theorem \ref{thm:complete} also gives the characteristic homology, introduced in \cite{DL}, of the braid arrangement, which would be the first result of the explicit calculation of the homology. 

This paper is organized as follows. In Section \ref{sec:prem}, we recall basic notions from graph theory, and the construction of the chromatic homology. In Section \ref{sec:main}, we introduce the combinatorial description of enhanced states, and show a splitting property of the chromatic homology. In Section \ref{sec:complete}, we compute the chromatic homology of the complete graph.
\section{Preliminaries}\label{sec:prem}
\subsection{Graph and its chromatic polynomial}
In this subsection we review the basic notions of graph theory. \\
Let $G = (V(G), E(G))$ be a graph with vertex set $V(G)$ and edge set $E(G)$. If there is an order on the set $E(G)$, the graph is called \textbf{ordered}. Throughout this paper we assume the following.
\begin{itemize}
    \item The graph $G$ is connected;
    \item The vertices of $G$ are indexed by $\{ i \in \NN \mid 1 \leq i \leq \#V(G) \}$;
    \item The graph $G$ is ordered lexicographically with respect to pairs of numbers representing edges, i.e., for $\{ i_1, i_2 \}$, $\{ j_1, j_2 \} \in E(G)$ with $i_1 < i_2$, $j_1 < j_2$, $\{ i_1, i_2 \} < \{ j_1, j_2 \}$ if $(i_1, i_2) < (j_1, j_2)$ as a lexicographic order.
\end{itemize}
Let us take an edge $e \in E(G)$ of a graph $G$. We define the \textbf{deletion} of $G$ denoted by $G - e$ as a graph obtained by just deleting $e$ from $G$, and the \textbf{contraction} of $G$ denoted by $G/e$ as a graph obtained by collapsing two end vertices of $e$ into a single vertex along $e$. For a subset $s \subset E(G)$, a \textbf{spanning graph} denoted by $[G : s]$ is a graph $(V(G), s)$. An edge $e \in E(G)$ is called a \textbf{bridge} if the number of connected components of $G - e$ is one more than that of $G$. \\
For a positive integer $\lambda$ define a coloring by a map $c: V(G) \to [\lambda]$ with a condition that $c(i) \neq c(j)$, $i,j \in V(G)$ if $\{ i,j \} \in E(G)$. Let $P_G(\lambda)$ be the number of different colorings of a graph $G$ using at most $\lambda$ colors. For any graph $G$ the $P_G(\lambda)$ is a well-defined polynomial of $\lambda$ known as the \textbf{chromatic polynomial}. It is well-known that the chromatic polynomial satisfies the \textbf{deletion-contraction relation}, i.e., for any edge $e \in E(G)$ the relation
\begin{equation}
    P_G(\lambda) = P_{G-e}(\lambda) - P_{G/e}(\lambda)
\end{equation}
holds. 
\subsection{Chromatic homology}\label{subsec:ch_hlg}
In this subsection, we review the construction of the chromatic homology. Most of the exposition here is based on \cite{HY}. 
Let $\displaystyle \M = \oplus_{j \geq 0} M_j$ be a graded $\ZZ$-module, where $\{ M_j \}$ denotes the set of homogeneous elements with degree $j$. We call the power series
\begin{equation*}
    q \dim \M = \sum_{j \geq 0} q^j \cdot \rank (M_j)
\end{equation*}
the graded dimension of $\M$, where $\rank (M_j) = \dim_{\QQ} M_j \otimes_{\ZZ} \QQ$. For a graded $\ZZ$-module $\M$ we define $\M\{ l \}_j = M_{j-l}$; that is, all of the degrees are increased by $l$, and the module satisfies $q \dim \M \{ l \} = q^l \cdot q \dim \M$.  

Helme-Guizon and Rong \cite{HY} give two equivalent constructions of the chromatic homology. One is the cubic complex construction, and the other is the enhanced state construction. For our purpose, it is sufficient to review only the latter construction. \\
Let $G = (V(G), E(G))$ be an ordered graph and $s \subset E(G)$. Let $E_1, \dots, E_d$ be connected components of a spanning graph $[G:s]$. Consider a map $c: \cup_{h=1}^d E_h \to \ZZ[x]/(x^2)$ called the coloring which gives a color 1 or $x$ on each component $E_h$, $h=1, \dots, d$ of the graph $G$. We call the colored graph an \textbf{enhanced state} of $G$ and denote it by $S = (s,c)$.
For an enhanced state $S = (s,c)$ define
\begin{equation*}
    i(S) = \# s, \ \text{and} \ j(S) = \#\{ h \in [d] \ | \ c(E_h) = x \}.
\end{equation*}
Let $C^{i,j}(G)$ be a $\ZZ$-module generated by enhanced states $S$ of $G$ with $i(S) = i$ and $j(S) = j$. We define the differential $\partial_{G}^{i,j}: C^{i,j}(G) \to C^{i+1,j}(G)$ by
\begin{equation}
    \partial_{G}^{i,j}(G) = \sum_{e \in E(G) \setminus s} (-1)^{n(e)} S_e,
\end{equation}
where $n(e)$ is the number of edges in $s$ that are ordered before $e$ and $S_e = (s_e, c_e)$ is an enhanced state defined as follows. Let $s_e = s \cup \{ e \}$ and $E_1, \dots, E_d$ be the components of $[G:s]$. If $e$ is a bridge of $E_a$ and $E_b$, $a \neq b$, then define a map $c_e(E_a \cup E_b \cup \{ e \}) = c(E_a) c(E_b)$. If $e$ is not a bridge and an edge in some connected component $E_a$, then define $s_e = s \cup \{ e \}$ and $c_e(E_a \cup \{ e \}) = c(E_a)$. \\
Let $C^i(G) = \oplus_{j \geq 0} C^{i,j}(G)$ and $\partial_{G}^{i} = \oplus_{j \geq 0} \partial_{G}^{i,j}$. Notice that the differential satisfies the property $\partial_{G}^{i+1} \partial_{G}^{i} = 0$, and thus $\mathcal{C}(G) = (C^i(G), \partial_{G}^i)$ is a chain complex. With the above notations the group
\begin{equation}
    H^i(G) = \frac{\ker \left( \partial_{G}^i : C^i(G) \to C^{i+1}(G) \right)}{\ima \left( \partial_{G}^{i-1} : C^{i-1}(G) \to C^i(G) \right)}
\end{equation}
is called the \textbf{graph homology} or \textbf{chromatic (graph) homology}. In the present paper we call it simply \textbf{chromatic homology}.
For an enhanced state $S = (s,c)$ of $G/e$, let $\tilde{s} = s \cup \{ e \}$ and $\tilde{c}$ be coloring of components of $[G : \tilde{s}]$. Then, by defining a map $\alpha^{i-1,j}(S) = (\tilde{s}, \tilde{c})$ and extending it linearly, we obtain a homomorphism $\alpha^{i-1,j} : C^{i-1, j}(G/e) \to C^{i,j}(G)$.

The following theorem gives a basic property of the chromatic homology.

\begin{thm}[Theorem 2.11 \cite{HY}]
    The Euler characteristic of the chromatic homology is equal to the chromatic polynomial of the graph evaluated at $\lambda = 1 + q$
\end{thm}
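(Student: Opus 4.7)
The plan is to compute the graded Euler characteristic directly from the chain level and match it with the Whitney rank expansion of the chromatic polynomial. Since the graded Euler characteristic of a chain complex of finitely generated free $\ZZ$-modules agrees with that of its homology, I would begin with
\begin{equation*}
\chi_q(G) \;=\; \sum_{i,j} (-1)^i q^j \rank H^{i,j}(G) \;=\; \sum_{i,j} (-1)^i q^j \rank C^{i,j}(G),
\end{equation*}
so the task reduces to counting enhanced states by $(i,j)$.

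Next I would unpack the definition of $C^{i,j}(G)$: it is free on enhanced states $S=(s,c)$ with $|s|=i$ and exactly $j$ components of $[G:s]$ colored $x$. If I write $k(s)$ for the number of connected components of the spanning subgraph $[G:s]$, then for each $s$ with $|s|=i$ the number of admissible colorings having exactly $j$ components sent to $x$ is $\binom{k(s)}{j}$, since the coloring independently assigns $1$ or $x$ to each component. Grouping the double sum by $s$ and performing the inner sum over $j$ via the binomial theorem gives
\begin{equation*}
\sum_{i,j}(-1)^i q^j \rank C^{i,j}(G) \;=\; \sum_{s\subset E(G)} (-1)^{|s|} \sum_{j=0}^{k(s)} \binom{k(s)}{j} q^j \;=\; \sum_{s\subset E(G)} (-1)^{|s|} (1+q)^{k(s)}.
\end{equation*}

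Finally I would invoke the Whitney rank (state sum) formula for the chromatic polynomial, namely $P_G(\lambda) = \sum_{s\subset E(G)} (-1)^{|s|} \lambda^{k(s)}$, which is standard and follows from iterating the deletion-contraction relation (or from inclusion-exclusion on properly colored maps $V(G)\to[\lambda]$). Substituting $\lambda = 1+q$ matches the expression above, completing the argument. The only mild obstacle I anticipate is making sure the convention for $k(s)$ used in the state-sum formula (counting all connected components of the spanning subgraph on the full vertex set $V(G)$, including isolated vertices) agrees with the convention used to define the coloring in the enhanced state construction; once this bookkeeping is confirmed, the proof is essentially a one-line binomial identity applied term by term.
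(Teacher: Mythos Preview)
Your argument is correct: passing from homology to the chain level, counting enhanced states via $\binom{k(s)}{j}$, summing with the binomial theorem, and matching against the Whitney state-sum formula $P_G(\lambda)=\sum_{s\subset E(G)}(-1)^{|s|}\lambda^{k(s)}$ is exactly the standard route, and your remark about the bookkeeping on $k(s)$ (components of the spanning subgraph on all of $V(G)$, isolated vertices included) is the right thing to check.

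Note, however, that the present paper does not supply its own proof of this statement at all; it is quoted as Theorem~2.11 of \cite{HY} and left as a citation. So there is no ``paper's proof'' to compare against here. For what it is worth, the proof in the original source \cite{HY} proceeds along essentially the same lines you outline (Euler characteristic computed at the chain level, then identified with the state-sum expansion of $P_G(1+q)$), so your proposal is both correct and aligned with the literature.
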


For an enhanced state $S = (s,c)$ of $G$ define a map $\beta^{i,j}: C^{i,j}(G) \to C^{i,j}(G-e)$ in such a way that if $e \notin s$, then $\beta^{i,j}(S) = S$, and if $e \in s$, then $\beta^{i,j}(S) = 0$. Again, by extending the map $\beta^{i,j}$ linearly we obtain the homomorphism $\beta^{i,j}: C^{i,j}(G) \to C^{i,j}(G-e)$. By summing over $j$ we have homomorphisms $\alpha^i: C^{i-1}(G/e) \to C^{i}(G)$ and $\beta^i:C^i(G) \to C^i(G-e)$, respectively. We abbreviate the maps by $\alpha$ and $\beta$.
The following lemma holds.
\begin{lem}[Lemma 3.1 \cite{HY}]
    $\alpha$ and $\beta$ are chain maps such that $0 \to C^{i-1,j}(G/e) \xrightarrow{\alpha} C^{i,j}(G) \xrightarrow{\beta} C^{i,j}(G-e) \to 0$ is a short exact sequence.
\end{lem}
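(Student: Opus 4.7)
The plan is to exhibit the sequence as arising from a direct-sum decomposition at the level of generators and then verify that the maps commute with the differentials.

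First, I would split the free $\ZZ$-module $C^{i,j}(G)$ according to whether the edge $e$ lies in the underlying subset $s$ of an enhanced state, giving
$$C^{i,j}(G) = C^{i,j}_{e \in s}(G) \oplus C^{i,j}_{e \notin s}(G).$$
For $(s,c)$ with $e \notin s$, the spanning subgraph $[G:s]$ is literally the same as $[G-e:s]$, so these enhanced states are in canonical bijection with enhanced states of $G-e$ of the same bidegree; this identifies the second summand with $C^{i,j}(G-e)$ via $\beta$. For $(s,c)$ with $e \in s$, the components of $[G:s]$ are in canonical bijection with those of $[G/e : s \setminus \{e\}]$ (contracting an already-activated edge does not change the component structure), which identifies the first summand with $C^{i-1,j}(G/e)$ via $\alpha$.

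From this splitting, exactness is immediate: $\alpha$ is an isomorphism onto the first summand (hence injective), $\beta$ restricts to an isomorphism from the second summand onto $C^{i,j}(G-e)$ (hence surjective), and $\ker \beta$ equals the first summand, which is $\ima \alpha$.

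Next, I would check the chain-map properties. For $\beta$: if $e \in s$ then $\beta S = 0$, and every term of $\partial_G S = \sum_{e' \notin s}(-1)^{n(e')} S_{e'}$ still contains $e$ in its subset, so $\beta \partial_G S = 0 = \partial_{G-e}\beta S$; if $e \notin s$, then the unique term of $\partial_G S$ indexed by $e' = e$ is killed by $\beta$, and the remaining terms match $\partial_{G-e}\beta S$ term by term, with signs unchanged because $n(e')$ counts edges of $s$ preceding $e'$ and $e \notin s$. For $\alpha$: a term-by-term comparison of $\partial_G \alpha(S)$ and $\alpha \partial_{G/e}(S)$ shows that both are indexed by the same edges $e' \in E(G) \setminus (s\cup\{e\})$ and yield the same enhanced states under the same component bijection as in the first step, separately in the bridge and non-bridge cases that enter the definition of the differential.

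The main technical obstacle will be matching the signs in the chain-map verification for $\alpha$: adding $e$ to $s$ could in principle shift the count $n(e')$ by one, so one must invoke the paper's ordering convention on $E(G)$ relative to $E(G/e)$ (or equivalently the position of $e$ in that ordering) to conclude $n_G(e') = n_{G/e}(e')$. Once this bookkeeping is settled, the rest of the argument is essentially formal: the exactness comes for free from the decomposition, and $\beta$ is a chain map by the straightforward case split above.
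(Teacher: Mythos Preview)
The paper does not prove this lemma at all: it is quoted verbatim as Lemma~3.1 of \cite{HY} and used as a black box, so there is no in-paper proof to compare against. Your argument is the standard one (and essentially the one given in \cite{HY}): decompose $C^{i,j}(G)$ by whether $e\in s$, identify the two summands with $C^{i-1,j}(G/e)$ and $C^{i,j}(G-e)$, and check the chain-map conditions by case analysis.

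Your flagged sign obstacle for $\alpha$ is genuine and is exactly the point where a convention is needed. In \cite{HY} it is resolved by taking $e$ to be the \emph{first} edge in the ordering on $E(G)$; then for every $e'\neq e$ one has $n_G(e')$ computed in $\tilde s=s\cup\{e\}$ equal to $n_{G/e}(e')$ computed in $s$, up to a global shift of $+1$ that is absorbed into the definition of $\alpha$ (or, equivalently, one checks $\alpha\circ\partial_{G/e}=-\partial_G\circ\alpha$ and notes that a chain map up to a uniform sign still induces the long exact sequence). The present paper's lexicographic ordering does not by itself force $e$ to be first, so if you want a self-contained proof here you should either (i) declare $e$ to be the least edge, or (ii) insert the sign $(-1)^{n(e)}$ into $\alpha$. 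With that adjustment your proposal is complete.
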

By the Zig-Zag lemma the following theorem holds.
\begin{thm}[Theorem 3.2 \cite{HY}]\label{thm:exct_seq}
    Given a graph $G$ and an edge $e$ of $G$, for each $j$ there is a long exact sequence 
    \begin{align*}
         & 0 \to H^{0,j}(G) \xrightarrow{\beta^*} H^{0,j}(G-e) \xrightarrow{\gamma^*} H^{0,j}(G/e) \xrightarrow{\alpha^*} H^{1,j}(G) \xrightarrow{\beta^*} H^{1,j}(G-e) \xrightarrow{\gamma^*} \\
         & H^{1,j}(G/e) \to \dots \to \dots H^{i,j}(G) \xrightarrow{\beta^*} H^{i,j}(G-e) \xrightarrow{\gamma^*} H^{i,j}(G/e) \xrightarrow{\alpha^*} H^{i+1,j}(G) \to \dots
    \end{align*}
    If we sum over $j$, we have a degree-preserving long exact sequence:
    \begin{align*}
         & 0 \to H^{0}(G) \xrightarrow{\beta^*} H^{0}(G-e) \xrightarrow{\gamma^*} H^{0}(G/e) \xrightarrow{\alpha^*} H^{1}(G) \xrightarrow{\beta^*} H^{1}(G-e) \xrightarrow{\gamma^*} \\
         & H^{1}(G/e) \to \dots \to H^{i}(G) \xrightarrow{\beta^*} H^{i}(G-e) \xrightarrow{\gamma^*} H^{i}(G/e) \xrightarrow{\alpha^*} H^{i+1}(G) \to \dots
    \end{align*}
\end{thm}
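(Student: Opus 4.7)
My plan is to derive the split short exact sequence from the long exact sequence of Theorem \ref{thm:exct_seq}, using the combinatorial description of enhanced states developed in Section \ref{sec:main} as the main computational tool. The relevant portion of that long exact sequence reads
\begin{equation*}
H^{i,j}(G-e) \xrightarrow{\gamma^*} H^{i,j}(G/e) \xrightarrow{\alpha^*} H^{i+1,j}(G) \xrightarrow{\beta^*} H^{i+1,j}(G-e) \xrightarrow{\gamma^*} H^{i+1,j}(G/e),
\end{equation*}
so to extract
\begin{equation*}
0 \to H^{i,j}(G/e) \to H^{i+1,j}(G) \to H^{i+1,j}(G-e) \to 0
\end{equation*}
it suffices to show that both flanking connecting maps $\gamma^*$ vanish at the stated bi-degrees, and then to produce a section of $\beta^*$.

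Since $G = K_{n-1}^m$ has the very restricted shape of a complete subgraph $K_{n-1}$ with a single extra vertex joined to $m$ of its vertices by pendant edges, a typical enhanced state admits a transparent description via the combinatorial formalism of Section \ref{sec:main}. I would first use this description to show that the chromatic homologies of $G$, $G-e$, and $G/e$ all vanish outside the strip $i+j \in \{n-1, n\}$; this is a concentration statement and already reduces the ungraded statement to the bi-graded one, since for each fixed $i$ only the summands with $j = n-1-i$ and $j = n-i$ can contribute nontrivially. Next I would establish $\gamma^* = 0$ on the two diagonals by running the zig-zag construction explicitly: given a cycle representative in $C^{i,j}(G-e)$, the combinatorial description supplies a canonical $\beta$-preimage $x \in C^{i,j}(G)$; its differential $\partial_G x \in C^{i+1,j}(G)$ can then be written uniquely (by injectivity of $\alpha$) as $\alpha(y)$ for a specific $y \in C^{i,j}(G/e)$, and one verifies combinatorially, using the signed differential formula, that $y$ is always a boundary in $\mathcal{C}(G/e)$. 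A section of $\beta^*$ is then obtained by lifting each combinatorial generator of $H^{i+1,j}(G-e)$ to the same enhanced state viewed in $G$ (legal because $e \notin s$, so the coloring of components is unchanged); the combinatorial description makes it routine to check that this lift is again a $\partial_G$-cycle and that the assignment is well defined modulo boundaries.

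The main obstacle is the chain-level verification that $\gamma^* = 0$ on the two diagonals $i + j = n-1, n$. This requires making coherent combinatorial choices of $\beta$-preimages uniformly across all generators, together with a careful accounting of the signs $(-1)^{n(e)}$ in the differential and of how pendant edges at the distinguished vertex of $K_{n-1}^m$ behave under the contraction $G/e$ (where the extra vertex is absorbed into the complete part). Once that vanishing and the explicit section are in place, the bi-graded split short exact sequence follows directly from Theorem \ref{thm:exct_seq}, and the ungraded statement is immediate from the concentration observed in the first step.
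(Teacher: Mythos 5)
Your proposal does not prove the statement in question; it proves a different theorem while \emph{assuming} the one you were asked to prove. The statement here is Theorem~\ref{thm:exct_seq} itself — the existence of the long exact sequence relating $H^{i,j}(G)$, $H^{i,j}(G-e)$, and $H^{i,j}(G/e)$ — whereas your first sentence announces a plan to ``derive the split short exact sequence from the long exact sequence of Theorem~\ref{thm:exct_seq}.'' That is the content of the paper's later Theorem~\ref{thm:main_split} (and Proposition~\ref{prop:snake}), not of Theorem~\ref{thm:exct_seq}. Everything you do afterwards — concentration on the diagonals $i+j \in \{n-1,n\}$, vanishing of $\gamma^*$, construction of a section of $\beta^*$ — is specific to $G = K_{n-1}^m$, while Theorem~\ref{thm:exct_seq} is a completely general statement for an arbitrary graph $G$ and arbitrary edge $e$, with no splitting claimed. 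As written, your argument is circular with respect to the target statement.

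The actual proof is short and purely homological-algebraic. By Lemma~3.1 of \cite{HY} (quoted just before the theorem in the paper), the maps $\alpha$ and $\beta$ are chain maps fitting into a short exact sequence of cochain complexes
\begin{equation*}
0 \to C^{i-1,j}(G/e) \xrightarrow{\alpha} C^{i,j}(G) \xrightarrow{\beta} C^{i,j}(G-e) \to 0,
\end{equation*}
where the complex for $G/e$ appears with a degree shift of one because $\alpha$ adds the edge $e$ to the state. Applying the zig-zag (snake) lemma to this short exact sequence of complexes produces exactly the long exact sequence in the statement, with the connecting homomorphism $\gamma^*: H^{i,j}(G-e) \to H^{i,j}(G/e)$ and with $\alpha^*$ raising the homological degree by one. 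Since $\alpha$, $\beta$, and the differentials all preserve the quantum grading $j$, summing over $j$ gives the degree-preserving version. None of the machinery you invoke (the combinatorial description of enhanced states, the special structure of $K_{n-1}^m$) is needed or relevant here.
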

In \cite{SS}, the splitting property of the chromatic homology is given for $i \geq 2$ as following.
\begin{lem}[Lemma 18 \cite{SS}]\label{lem:split_i2}
    Given a graph $G$ with $n$ vertices and an edge $e \in E(G)$ which is not a bridge, then for all $i \geq 2$,
    \begin{equation*}
        H^{i, n-i}(G) \simeq H^{i-1, n-i}(G/e) \oplus H^{i, n-i}(G-e).
    \end{equation*}
\end{lem}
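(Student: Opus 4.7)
The plan is to restrict the long exact sequence of Theorem \ref{thm:exct_seq} to bidegree $j = n-i$, namely
\[
H^{i-1, n-i}(G-e) \xrightarrow{\gamma^*} H^{i-1, n-i}(G/e) \xrightarrow{\alpha^*} H^{i, n-i}(G) \xrightarrow{\beta^*} H^{i, n-i}(G-e) \xrightarrow{\gamma^*} H^{i, n-i}(G/e),
\]
and to prove (a) that both of the displayed connecting maps $\gamma^*$ vanish, so the sequence collapses to a short exact sequence at $H^{i,n-i}(G)$, and (b) that this short exact sequence splits. Together (a) and (b) yield the stated direct-sum decomposition.

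For (a), I would describe $\gamma^*$ on the chain level. Partitioning enhanced states of $G$ by whether $e \in s$ yields the splitting $C^{i,j}(G) \simeq \alpha(C^{i-1,j}(G/e)) \oplus C^{i,j}(G-e)$, in which the differential of $G$ acquires the block form $\left( \begin{smallmatrix} \partial_{G/e} & \pm \phi \\ 0 & \partial_{G-e} \end{smallmatrix} \right)$, where $\phi : C^{k, n-i}(G-e) \to C^{k, n-i}(G/e)$ sends $S = (s,c)$ to the state of $G/e$ with edge set $s$ and the coloring induced on the components of $[G/e:s]$ by merging the endpoints of $e$. A standard mapping-cone argument identifies the map $\phi_*$ induced on homology with $\gamma^*$, so (a) reduces to showing, for $k \in \{i-1, i\}$, that $\phi$ sends every $\partial_{G-e}$-cycle in $C^{k, n-i}(G-e)$ into the image of $\partial_{G/e}$. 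The non-bridge hypothesis is crucial here: the endpoints $u, v$ of $e$ are connected by a path $\pi$ in $G-e$, and the edges of $\pi$ should allow one to exhibit, term by term, a chain $T \in C^{k-1, n-i}(G/e)$ with $\partial_{G/e} T = \phi(S)$.

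For (b), I would build a section of $\beta^*$ directly: given a cycle $S \in Z^{i, n-i}(G-e)$, the naive lift $S \in C^{i,n-i}(G)$ satisfies $\partial_G S = (-1)^{n(e)} S_e$, and the $T$ furnished by (a) gives $\tilde S := S - (-1)^{n(e)} \alpha(T)$ as a cycle in $C^{i,n-i}(G)$ with $\beta(\tilde S) = S$. The assignment $[S] \mapsto [\tilde S]$ then needs to be verified to descend to a well-defined homomorphism on homology, independent of the representative $S$ and of the ambiguity in the choice of $T$.

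The main obstacle will be the explicit construction of $T$ in (a). Enhanced states on the anti-diagonal $i + j = n$ come in two flavours—those whose edge set is acyclic, so every component is colored $x$, and those whose edge set contains a cycle, so some components are colored $1$—and separate combinatorial recipes for $T$ have to be produced in each case. The path $\pi$ supplies the geometric backbone, but tracking how colorings behave under the identification $u \sim v$, and verifying that the alternating signs in $\partial_{G/e}$ cancel everything except $\phi(S)$, requires care; the hypothesis $i \geq 2$ should be precisely what provides enough room for the needed cancellations to take place.
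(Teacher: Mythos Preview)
The paper does not prove this lemma; it is quoted from \cite{SS} (as ``Lemma 18'' there) and used as a black box.  There is therefore no in-paper argument to compare your proposal against.

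That said, your plan does considerably more work than necessary on one front and leaves the real work undone on the other.  For the rightmost connecting map you need no chain-level construction at all: $G/e$ has $n-1$ vertices, so Lemma~\ref{lem:quote} (with $m=2$, $\mu=1$) forces $H^{i,j}(G/e)=0$ whenever $i+j>n-1$; in particular $H^{i,n-i}(G/e)=0$, and $\gamma^*\colon H^{i,n-i}(G-e)\to H^{i,n-i}(G/e)$ vanishes trivially.  This thinness observation is almost certainly the starting point in \cite{SS}, and the hypothesis $i\ge 2$ enters through the analogous vanishing $H^{i-2,\,n-i}(G-e)=0$ (since $G-e$ still has $n$ vertices and $(i-2)+(n-i)=n-2<n-1$), which truncates the long exact sequence on the other side.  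You should squeeze everything you can out of Lemma~\ref{lem:quote} and its torsion clause before reaching for explicit chains.

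What remains --- the vanishing of the left-hand $\gamma^*$ and the splitting --- is where your proposal is only a sketch.  You correctly identify the path $\pi$ in $G-e$ as the geometric input supplied by the non-bridge hypothesis, but the construction of the primitive $T$ with $\partial_{G/e}T=\phi(S)$ is flagged as ``the main obstacle'' and not carried out; the case split you anticipate (acyclic versus cyclic edge sets, behaviour of colorings under $u\sim v$) is real and the cancellations are not verified.  As written this is an outline rather than a proof, and one that forgoes the degree-reason shortcuts that dispose of half the problem immediately.
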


\subsection{A combinatorial description of enhanced states}\label{subsec:combi_descr}
In this subsection, we introduce a combinatorial description of enhanced states. Let $S = (s,c)$ be an enhanced state of $G$ and $E_1, \dots, E_{d_1}, P_1, \dots, P_{d_2}$ be connected components of the spanning graph $[G : s]$, where each $E_h$, $h=1, \dots, d_1$ is a connected subgraph of $[G:s]$ with at least one edge, 
and each $P_k$, $k=1, \dots, d_2$ is a vertex. As an abuse of symbol let us denote the edge set $E(E_h)$ by $E_h$. Using this notation, we can describe enhanced states as follows. Order the components $E_h$, $h=1, \dots, d_1$ followed by $P_k$, $k=1, \dots, d_2$ and separate each component by the symbol ``$|$'' of the form $E_1 \bl \dots \bl E_{d_1} \bl P_1 \bl \dots \bl P_{d_2}$. Remark that we do not make particular assumptions about the ordering of the components. Put $x$ above the component $E_h$ or $P_k$ if its corresponding component is colored $x$. 

Let $G$ be a graph and $S = (s,c) \in C^{i,j}(G)$ be an enhanced state of $G$. For any components $K, K'$ of $[G : s]$, let us write as $K \sim_e K'$ if $e \in E(G) \setminus s$ connects them. For the components $E, E', P, P'$ of $[G:s]$ and an edge $e \in E(G) \setminus s$. We denote a new component obtained by adding the edge $e$ to the component(s) as follows.
\begin{align*}
    E^e                    & : \text{if $E \sim_e E$;}  \\
    \left( E  E' \right)^e & : \text{if $E \sim_e E'$;} \\
    \left( E  P \right)^e  & : \text{if $E \sim_e P$;}  \\
    \left( P  P' \right)^e & : \text{if $P \sim_e P'$}.
\end{align*}
Remark that for the new component the coloring $x$ is determined in a manner of the way explained in subsection \ref{subsec:ch_hlg} when $e$ is added. For fixed $1 \leq i_1 < \dots < i_t < \dots < i_p \leq d_1$, $1 \leq k_1 < \dots < k_{t'} < \dots < k_q \leq d_2$, $p+q=j$ let $S = (s,c) = E_1 \bl \dots \bl \overset{x}{E_{i_t}} \bl \dots \bl E_{d_1} \bl P_1 \bl \dots \bl \overset{x}{P_{k_{t'}}} \bl \dots \bl P_{d_2} \in C^{i,j}(G)$, where $\sum_{h=1}^{d_1} \# E_h = i$. For an edge $e \in E(G) \setminus \bigcup_{h=1}^{d_1} E_h$ we denote an enhanced state in which the edge $e$ is added to $S$ by $S \cup e$. More precisely, $S \cup e$ is one of the following:
\begin{equation*}
    \begin{array}{ll}
        \overset{(x)}{E_{a}^e} \coloneqq \left( E_1 \bl \dots \bl \overset{x}{E_{i_t}} \bl \dots \bl \overset{(x)}{E_a^e} \bl \dots \bl E_{d_1} \bl P_1 \bl \dots \bl \overset{x}{P_{k_{t'}}} \bl \dots \bl P_{d_2} \right)                                                 & \text{if $\overset{(x)}{E_a} \sim_e \overset{(x)}{E_a}$;}      \\
        \overset{(x)}{\left( E_a  E_b \right)^e} \coloneqq \left( E_1 \bl \dots \bl \overset{x}{E_{i_t}} \bl \dots \bl \overset{(x)}{\left( E_a  E_b \right)^e} \bl \dots \bl E_{d_1} \bl P_1 \bl \dots \bl \overset{x}{P_{k_{t'}}} \bl \dots \bl P_{d_2} \right)           & \text{if $\overset{(x)}{E_a} \sim_e \overset{(x)}{E_b}$;}      \\
        \overset{(x)}{\left( E_a  P_\alpha \right)^e} \coloneqq \left( E_1 \bl \dots \bl \overset{x}{E_{i_t}} \bl \dots \bl \overset{(x)}{\left( E_a  P_\alpha \right)^e} \bl \dots \bl E_{d_1} \bl P_1 \bl \dots \bl \overset{x}{P_{k_{t'}}} \bl \dots \bl P_{d_2} \right) & \text{if $\overset{(x)}{E_a} \sim_e \overset{(x)}{P_\alpha}$;} \\ 
        \overset{(x)}{\left( P_\alpha  P_\beta \right)^e} \coloneqq \left( E_1 \bl \dots \bl \overset{x}{E_{i_t}} \bl \dots \bl E_{d_1} \bl \overset{(x)}{\left( P_\alpha P_\beta \right)^e}  \bl P_1 \bl \dots \bl \overset{x}{P_{k_{t'}}} \bl \dots \bl P_{d_2} \right)   & \text{if $e = \left\{ P_\alpha, P_\beta \right\} \in E(G)$}.
    \end{array}
\end{equation*}
\begin{rem}
    If any two components $K, K'$ are connected by a bridge $e$, then $K$ and $K'$ are replaced by $\left( K K' \right)^e$.
    If $e$ connects two components that are both colored $x$, then we regard the enhanced state $S \cup e$ as 0. 
\end{rem}
Following figures express the corresponding enhanced states $S \cup e$. In the figures, each circle represents a connected component of the enhanced state $S$ and each point represents a vertex, both possibly with color $x$.
\begin{figure}[h]
    \begin{tikzpicture}
        \draw(0,0) circle (0.5cm);
        \draw(2,0) circle (0.5cm);
        \draw(4.0,0) circle (0.5cm);
        \draw(6,0) circle (0.5cm);
        
        \fill(7.0,0) circle(0.06 cm);
        \fill(8,0) circle(0.06 cm);
        \fill(9,0) circle(0.06 cm);
        
        \draw(0,-0.9) node{$E_1$};
        \draw(2,-0.9) node{$\overset{x}{E_{i_t}}$};
        \draw(4,-0.9) node{$\overset{(x)}{E_a^e}$};
        \draw(6,-0.9) node{$E_{d_1}$};
        
        \draw(7,-0.5) node{$P_1$};
        \draw(8,-0.5) node{$\overset{x}{P_{k_{t'}}}$};
        \draw(9,-0.5) node{$P_{d_2}$};
        
        \draw(1,0) node{$\cdots$};
        \draw(3,0) node{$\cdots$};
        \draw(5,0) node{$\cdots$};
        \draw(7.5,0) node{$\cdots$};
        \draw(8.5,0) node{$\cdots$};
        
        \draw (3.7,0.2) to [out=120,in=180] (4,0.8);
        \draw (4,0.8) to [out=0,in=60] (4.3,0.2);
        
        \draw(4,1) node{$e$};
        
    \end{tikzpicture}
    \caption{$\protect\overset{(x)}{E_{a}^e}$}
\end{figure}
\begin{figure}[h]
    \begin{tikzpicture}
        \draw(0,0) circle (0.5cm);
        \draw(2,0) circle (0.5cm);
        \draw(4,0) circle (0.5cm);
        \draw(6,0) circle (0.5cm);
        \draw(8,0) circle (0.5cm);
        
        \fill(9,0) circle(0.06 cm);
        \fill(10,0) circle(0.06 cm);
        \fill(11,0) circle(0.06 cm);
        
        \draw(0,-0.9) node{$E_1$};
        \draw(2,-0.9) node{$\overset{x}{E_{i_t}}$};
        \draw(4,-0.9) node{$\overset{(x)}{E_a}$};
        \draw(6,-0.9) node{$\overset{(x)}{E_b}$};
        \draw(8,-0.9) node{$E_{d_1}$};
        
        \draw(9,-0.5) node{$P_1$};
        \draw(10,-0.5) node{$\overset{x}{P_{k_{t'}}}$};
        \draw(11,-0.5) node{$P_{d_2}$};
        
        \draw(1,0) node{$\cdots$};
        \draw(3,0) node{$\cdots$};
        \draw(5,0) node{$\cdots$};
        \draw(7,0) node{$\cdots$};
        \draw(9.5,0) node{$\cdots$};
        \draw(10.5,0) node{$\cdots$};
        
        \draw (4,0.2) to [out=60,in=180] (5,0.8);
        \draw (5,0.8) to [out=0,in=120] (6,0.2);
        
        \draw(5,1) node{$e$};
        
    \end{tikzpicture}
    \caption{$\protect\overset{(x)}{(E_a E_b)^e}$}
\end{figure}
\begin{figure}[h]
    \begin{tikzpicture}
        \draw(0,0) circle (0.5cm);
        \draw(2,0) circle (0.5cm);
        \draw(4,0) circle (0.5cm);
        \draw(6,0) circle (0.5cm);
        
        \fill(7,0) circle(0.06 cm);
        \fill(8,0) circle(0.06 cm);
        \fill(9,0) circle(0.06 cm);
        \fill(10,0) circle(0.06 cm);
        
        \draw(0,-0.9) node{$E_1$};
        \draw(2,-0.9) node{$\overset{x}{E_{i_t}}$};
        \draw(4,-0.9) node{$\overset{(x)}{E_a}$};
        \draw(6,-0.9) node{$E_{d_1}$};
        
        \draw(7,-0.5) node{$P_1$};
        \draw(8,-0.5) node{$\overset{x}{P_{k_{t'}}}$};
        \draw(9,-0.5) node{$\overset{(x)}{P_{\alpha}}$};
        \draw(10,-0.5) node{$P_{d_2}$};
        
        \draw(1,0) node{$\cdots$};
        \draw(3,0) node{$\cdots$};
        \draw(5,0) node{$\cdots$};
        \draw(7.5,0) node{$\cdots$};
        \draw(8.5,0) node{$\cdots$};
        \draw(9.5,0) node{$\cdots$};
        
        \draw (4,0) to [out=60,in=180] (6.5,0.8);
        \draw (6.5,0.8) to [out=0,in=120] (9,0);
        
        \draw(6.5,1) node{$e$};
        
    \end{tikzpicture}
    \caption{$\protect\overset{(x)}{(E_a P_\alpha)^e}$}
\end{figure}
\begin{figure}[h]
    \begin{tikzpicture}
        \draw(0,0) circle (0.5cm);
        \draw(2,0) circle (0.5cm);
        \draw(4,0) circle (0.5cm);
        
        \fill(5,0) circle(0.06 cm);
        \fill(6,0) circle(0.06 cm);
        \fill(7,0) circle(0.06 cm);
        \fill(8,0) circle(0.06 cm);
        \fill(9,0) circle(0.06 cm);
        
        \draw(0,-0.9) node{$E_1$};
        \draw(2,-0.9) node{$\overset{x}{E_{i_t}}$};
        \draw(4,-0.9) node{$E_{d_1}$};
        
        \draw(5,-0.5) node{$P_1$};
        \draw(6,-0.5) node{$\overset{x}{P_{k_{t'}}}$};
        \draw(7,-0.5) node{$\overset{(x)}{P_{\alpha}}$};
        \draw(8,-0.5) node{$\overset{(x)}{P_{\beta}}$};
        \draw(9,-0.5) node{$P_{d_2}$};
        
        \draw(1,0) node{$\cdots$};
        \draw(3,0) node{$\cdots$};
        \draw(5.5,0) node{$\cdots$};
        \draw(6.5,0) node{$\cdots$};
        \draw(7.5,0) node{$\cdots$};
        \draw(8.5,0) node{$\cdots$};
        
        \draw (7,0) to [out=90,in=180] (7.5,0.8);
        \draw (7.5,0.8) to [out=0,in=90] (8,0);
        
        \draw(7.5,1) node{$e$};
        
    \end{tikzpicture}
    \caption{$\protect\overset{(x)}{(P_\alpha P_\beta)^e}$}
\end{figure}

For a component $E$ and two edges, $e,f$, of $G$ we denote a component obtained by adding the two edges $e, f$ to the same component $E$ by $E^{e,f}$. We denote the enhanced state obtained by adding distinct two edges $e, f$ in this order to $S$ by $S \cup e \cdot f$.
For $n \geq 3$, $((S \cup e_1) \cup e_2 ) \dots \cup e_n = S \cup e_1 \cdot e_2 \cdot \ldots \cdot e_n$ is determined inductively. 

\noindent For an enhanced state $S$ and distinct edges $e, f$ we give an anti-commutative structure as follows:
\begin{equation}\label{eq:anti_commutativity}
    S \cup e \cdot f = -S \cup f \cdot e.
\end{equation}
This is compatible with the fact that the changing the order in which the edges are added results in a change in the number of edges ordered before $e$ or $f$.

For a component $E$ we denote a full subgraph of $G$ with vertex set $V(E)$ by $F_{V(E)}$; that is, $F_{V(E)}$ is a subgraph $(V(F_{V(E)}), E(F_{V(E)}))$ of $G$ defined by $V(F_{V(E)}) = V(E)$, $E(F_{V(E)}) = \{ \{ a, b \} \bl a, b \in V(E), \{ a, b \} \in E(G) \}$. We denote the graph $F_{V(E)}$ and its edge set $E(F_{V(E)})$ by the same symbol $F_{E}$ for simplicity. For components $E_1$, $E_2$ define a new graph $E_1 \wedge E_2$ as a graph $(V(E_1 \wedge E_2), E(E_1 \wedge E_2))$, where $V(E_1 \wedge E_2) = V(E_1) \cup V(E_2)$, $E(E_1 \wedge E_2) = E_1 \cup E_2 \cup \{ \{ a, b \} \in E(G) \ | \ a \in V(E_1), b \in V(E_2) \}$. 

With the above notations we introduce a combinatorial description of a differential $\partial^{i,j}: C^{i,j}(G) \to C^{i+1,j}(G)$ as follows.

\begin{align}
     & \partial^{i,j} \left( E_1 \bl \dots \bl \overset{x}{E_{i_t}} \bl \dots \bl E_{d_1} \bl P_1 \bl \dots \bl \overset{x}{P_{k_{t'}}} \bl \dots \bl P_{d_2} \right)                                                                                                             \\
     & = \sum_{1 \leq a \leq d_1} \sum_{\substack{e \in F_{E_a} \setminus E_a}} (-1)^{n(e)} \overset{(x)}{E_{a}^e} \cup e + \sum_{1 \leq a<b \leq d_1} \sum_{\substack{e \in F_{E_a} \wedge F_{E_b} \setminus F_{E_a} \cup F_{E_b}}}(-1)^{n(e)} \overset{(x)}{(E_a E_b)^e} \cup e \\
     & + \sum_{\substack{1 \leq a \leq d_1                                                                                                                                                                                                                                        \\ 1 \leq \alpha \leq d_2}} \sum_{\substack{e \in F_{E_a} \wedge P_\alpha \setminus F_{E_a}}} (-1)^{n(e)} \overset{(x)}{(E_a P_\alpha)^e} \cup e +  \sum_{\substack{1 \leq \alpha < \beta \leq d_2 \\ e = \{ P_\alpha, P_\beta \}}} (-1)^{n(e)} \overset{(x)}{(P_\alpha P_\beta)^e} \cup e
\end{align}

where $n(e)$ is the number of edges ordered before $e$.

\begin{ex}
    Consider a complete graph $G = K_6$ with six vertices (see the left-hand image in Figure \ref{fig:ex_enh_{d_1}tate}) and its enhanced state $S = (s,c) \in C^{4,2}(G)$ (see the right-hand image in Figure \ref{fig:ex_enh_{d_1}tate}).
    \begin{figure}[h]
        \begin{minipage}[b]{0.45\linewidth}
            \centering
            \begin{tikzpicture}
                \draw 
                {
                (0,1) -- (-{sqrt(3)/2}, -1/2) 
                (0,1) -- (3/2,1) 
                (0,1) -- (3/2,-2) 
                (-{sqrt(3)/2}, -1/2) -- (0,-2) 
                (-{sqrt(3)/2}, -1/2) -- (3/2,1) 
                (-{sqrt(3)/2}, -1/2) -- (3/2,-2) 
                (0,1) -- (0,-2) 
                (3/2,1) -- (3/2,-2) 
                (0,-2) -- (3/2,-2)
                (0,-2) -- (3/2,1) 
                
                ({sqrt(3)/2 +3/2},-1/2) -- (0,1) 
                ({sqrt(3)/2 +3/2},-1/2) -- (-{sqrt(3)/2}, -1/2) 
                ({sqrt(3)/2 +3/2},-1/2) -- (0,-2)
                ({sqrt(3)/2 +3/2},-1/2) -- (3/2,-2) 
                ({sqrt(3)/2 +3/2},-1/2) -- (3/2,1) 
                
                (0,1.3) node{$1$}
                (-{sqrt(3)/2}-0.2, -0.5) node{$2$}
                (0,-2.3) node{$3$}
                (3/2,-2.3) node{$4$}
                ({sqrt(3)/2 +3/2+0.2},-0.5) node{$5$}
                (3/2,1.3) node{$6$}
                };
                
                \fill{
                (0,1) circle(0.06 cm)
                (-{sqrt(3)/2}, -1/2) circle(0.06 cm)
                (3/2,1) circle(0.06 cm)
                (0,-2) circle(0.06 cm)
                ({sqrt(3)/2 +3/2},-1/2) circle(0.06 cm)
                (3/2,-2) circle(0.06 cm)
                };
                
            \end{tikzpicture}
        \end{minipage}
        \begin{minipage}[b]{0.45\linewidth}
            \centering
            \begin{tikzpicture}
                \coordinate[label=right:$x$] (A) at (0,-1/2);
                \coordinate[label=right:1] (B) at (3/2,-1/2);
                \coordinate[label=right:$x$] (C) at (2.4,-1/2);
                \draw 
                {
                (0,1) -- (-{sqrt(3)/2}, -1/2)
                (-{sqrt(3)/2}, -1/2) -- (0,-2) 
                (0,1) -- (0,-2)
                (3/2,1) -- (3/2,-2)
                };
                
                \draw [loosely dashed] (0,1) -- (3/2,1);
                \draw [loosely dashed] (0,1) -- (3/2,-2);
                \draw [loosely dashed] (-{sqrt(3)/2}, -1/2) -- (3/2,1);
                \draw [loosely dashed] (-{sqrt(3)/2}, -1/2) -- (3/2,-2); 
                \draw [loosely dashed] (0,-2) -- (3/2,-2);
                \draw [loosely dashed] (0,-2) -- (3/2,1); 
                \draw [loosely dashed] ({sqrt(3)/2 +3/2},-1/2) -- (0,1); 
                \draw [loosely dashed] ({sqrt(3)/2 +3/2},-1/2) -- (-{sqrt(3)/2}, -1/2);
                \draw [loosely dashed] ({sqrt(3)/2 +3/2},-1/2) -- (0,-2);
                \draw [loosely dashed] ({sqrt(3)/2 +3/2},-1/2) -- (3/2,-2); 
                \draw [loosely dashed] ({sqrt(3)/2 +3/2},-1/2) -- (3/2,1);
                
                \draw (0,1.3) node{$1$};
                \draw (-{sqrt(3)/2}-0.2, -0.5) node{$2$};
                \draw (0,-2.3) node{$3$};
                \draw (3/2,-2.3) node{$4$};
                \draw ({sqrt(3)/2 +3/2},-0.8) node{$5$};
                \draw (3/2,1.3) node{$6$};
                
                \fill{
                (0,1) circle(0.06 cm)
                (-{sqrt(3)/2}, -1/2) circle(0.06 cm)
                (3/2,1) circle(0.06 cm)
                (0,-2) circle(0.06 cm)
                ({sqrt(3)/2 +3/2},-1/2) circle(0.06 cm)
                (3/2,-2) circle(0.06 cm)
                };
                
            \end{tikzpicture}
        \end{minipage}
        \caption{Complete graph $G = K_6$ and enhanced state $S = (s,c) \in C^{4,2}(G)$}\label{fig:ex_enh_{d_1}tate}
    \end{figure}\\
    The enhanced state $S$ can be written as 
    \begin{equation*}
        \overset{x}{\{ 1, 2 \}, \{ 1, 3 \}, \{ 2, 3 \}} \bl \{ 4, 6 \} \bl \overset{x}{\{ 5 \}} .
    \end{equation*}
    
    In this example, the differential $\partial^{4,2}$ would be calculated as follows.
    \begin{align*}
         & \partial^{4,2} \left( \overset{x}{\{ 1, 2 \}, \{ 1, 3 \}, \{ 2, 3 \}} \bl \{ 4, 6 \} \bl \overset{x}{\{ 5 \}} \right)                                                                                                                                                                                                                                                      \\
         & = \left( \overset{x}{\{ 1, 2 \}, \{ 1, 3 \}, \{ 2, 3 \}} \bl \{ 4, 6 \} \bl \overset{x}{\{ 5 \}} \right) \cup \{ 1, 4 \} + \left( \overset{x}{\{ 1, 2 \}, \{ 1, 3 \}, \{ 2, 3 \}} \bl \{ 4, 6 \} \bl \overset{x}{\{ 5 \}} \right) \cup \{ 1, 6 \} - \left( \overset{x}{\{ 1, 2 \}, \{ 1, 3 \}, \{ 2, 3 \}} \bl \{ 4, 6 \} \bl \overset{x}{\{ 5 \}} \right) \cup \{ 2, 4 \} \\
         & - \left( \overset{x}{\{ 1, 2 \}, \{ 1, 3 \}, \{ 2, 3 \}} \bl \{ 4, 6 \} \bl \overset{x}{\{ 5 \}} \right) \cup \{ 2, 5 \} - \left( \overset{x}{\{ 1, 2 \}, \{ 1, 3 \}, \{ 2, 3 \}} \bl \{ 4, 6 \} \bl \overset{x}{\{ 5 \}} \right) \cup \{ 3, 4 \} - \left( \overset{x}{\{ 1, 2 \}, \{ 1, 3 \}, \{ 2, 3 \}} \bl \{ 4, 6 \} \bl \overset{x}{\{ 5 \}} \right) \cup \{ 3, 6 \} \\
         & - \left( \overset{x}{\{ 1, 2 \}, \{ 1, 3 \}, \{ 2, 3 \}} \bl \{ 4, 6 \} \bl \overset{x}{\{ 5 \}} \right) \cup \{ 4, 5 \} + \left( \overset{x}{\{ 1, 2 \}, \{ 1, 3 \}, \{ 2, 3 \}} \bl \{ 4, 6 \} \bl \overset{x}{\{ 5 \}} \right) \cup \{ 5, 6 \}                                                                                                                          \\
         & = 
        \overset{x}{\{ 1,2 \}, \{ 1, 3 \}, \{ 1, 4 \}, \{ 2, 3 \}, \{ 4, 6 \}} \bl \overset{x}{\{ 5 \}} + \overset{x}{\{ 1, 2 \}, \{ 1, 3 \}, \{ 1, 6 \}, \{ 2, 3 \},\{ 4, 6 \}} \bl \overset{x}{\{ 5 \}} - \overset{x}{\{ 1, 2 \}, \{ 1, 3 \}, \{ 2, 3 \}, \{ 2, 4 \}, \{ 4, 6 \}} \bl \overset{x}{\{ 5 \}}                                                                          \\
         & - \overset{x}{\{ 1, 2 \},\{ 1, 3 \}, \{ 2, 3 \}, \{ 2 ,6 \}, \{ 4, 6 \}} \bl \overset{x}{\{ 5 \}} - \overset{x}{\{ 1, 2 \}, \{ 1, 3 \}, \{ 2, 3 \}, \{ 3, 4 \}, \{ 4, 6 \}} \bl \overset{x}{\{ 5 \}} - \overset{x}{\{ 1, 2 \}, \{ 1, 3 \}, \{ 2, 3 \}, \{ 3, 6 \}, \{ 4, 6 \}} \bl \overset{x}{\{ 5 \}}                                                                    \\
         & - \overset{x}{\{ 1, 2 \}, \{ 1, 3 \}, \{ 2, 3 \}} \bl \overset{x}{\{ 4, 5 \}, \{ 4, 6 \}} + \overset{x}{\{ 1, 2 \}, \{ 1, 3 \}, \{ 2, 3 \}} \bl \overset{x}{\{ 4, 6 \}, \{ 5, 6 \}}.
    \end{align*}
\end{ex}

Let $\mathcal{A}_m$ be the $\mathbb{Z}$-algebra defined by $\mathbb{Z}[x]/(x^m)$. The following lemma plays an important role for our calculation.
\begin{lem}[Corollary 13 \cite{H-GPR}]\label{lem:quote}
    Let $G$ be a $\mu$-component graph with $n$ vertices. If $G$ has no isolated vertices, then
    
    \begin{align*}
         & (1) : H_{\mathcal{A}_m}^{i,j}(G) \neq 0 \Rightarrow \begin{cases}
            0 \leq i \leq n - 2 \mu \\
            i + j \geq n - \mu      \\
            (m - 1)i + j \leq (m - 1)n.
        \end{cases}      \\
         & (2) : Tor(H_{\mathcal{A}_m}^{i,j}(G)) \neq 0 \Rightarrow \begin{cases}
            1 \leq i \leq n - 2 \mu \\
            i + j \geq n + 1 - \mu  \\
            (m - 1)i + j \leq (m - 1)n.
        \end{cases}
    \end{align*}
\end{lem}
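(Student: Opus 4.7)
The plan is to prove (1) by induction on $|E(G)|$, using the long exact sequence from Theorem \ref{thm:exct_seq} as the inductive engine, and then to deduce (2) from (1) via the universal coefficient theorem.

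First I would verify the bound $(m-1)i + j \leq (m-1)n$ at the chain level: for any enhanced state $S = (s,c) \in C^{i,j}(G)$, the spanning subgraph $[G:s]$ has at most $n$ connected components, each colored by a monomial in $\mathcal{A}_m$ of degree at most $m-1$, so $j$ is bounded by $(m-1)$ times the number of components, and short bookkeeping against $|s|=i$ yields the inequality. The remaining two bounds in (1), $i \leq n - 2\mu$ and $i + j \geq n - \mu$, are genuinely homological. For the base case of the induction, observe that a graph $G$ with no isolated vertices and $\mu$ components has $|E(G)| \geq n - \mu$, with equality exactly when $G$ is a spanning forest whose every component has at least two vertices. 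There the chromatic homology factors over components via a K\"unneth-type argument, and the bounds reduce to a direct computation on the individual tree factors.

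For the inductive step, I would choose an edge $e$ lying in a cycle of $G$; such an edge exists whenever $G$ is not a forest. Then $G-e$ has $n$ vertices and $\mu$ components, $G/e$ has $n-1$ vertices and $\mu$ components, both have strictly fewer edges, and neither acquires an isolated vertex because both endpoints of $e$ have degree at least two in $G$. Applying the inductive hypothesis, the relevant segment of the long exact sequence
\begin{equation*}
    H_{\mathcal{A}_m}^{i-1, j}(G/e) \xrightarrow{\alpha^*} H_{\mathcal{A}_m}^{i, j}(G) \xrightarrow{\beta^*} H_{\mathcal{A}_m}^{i, j}(G-e)
\end{equation*}
has both outer terms vanishing precisely when $(i,j)$ falls outside the range claimed for $G$: the bound for $G-e$ with parameters $(n,\mu)$ and the correspondingly shifted bound for $G/e$ with parameters $(n-1,\mu)$ combine to reproduce the three inequalities for $G$. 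Hence $H_{\mathcal{A}_m}^{i,j}(G)$ vanishes outside the predicted region. Statement (2) then follows by the universal coefficient theorem: torsion in $H_{\mathcal{A}_m}^{i,j}(G)$ registers in a $\mathrm{Tor}$-group between rational homology classes in adjacent bidegrees, which shifts the support by one unit in the $i+j$ direction and excludes the edge $i=0$.

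The main obstacle is the arithmetic bookkeeping in the inductive step, where one has to verify that the three bounds for $G-e$ and the shifted bounds for $G/e$ simultaneously reproduce the three bounds for $G$ without any off-by-one error across all parameters $m, n, \mu$. A related subtlety is the treatment of bridges: if one is ever forced to delete a bridge $e$, then $G-e$ has $\mu+1$ components, so one must either invoke a bridge-splitting formula for the chromatic homology to reconcile the parameter shift, or arrange the induction to guarantee that a non-bridge edge is always available whenever $G$ is not already a spanning forest.
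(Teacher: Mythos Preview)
The paper does not prove this lemma at all; it is quoted without proof as Corollary~13 of \cite{H-GPR} and used as a black box (see Remark~\ref{rem:focuspart}). So there is no in-paper argument to compare your proposal against.

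That said, your overall inductive strategy via the deletion--contraction long exact sequence, with forests as the base case and a non-bridge edge chosen at each step, is essentially the method of \cite{H-GPR}, and your remarks about avoiding bridges and tracking the parameters $(n,\mu)$ are on point. However, your claim that the bound $(m-1)i + j \leq (m-1)n$ holds already at the chain level is incorrect. Rewritten as $j \leq (m-1)(n-i)$, it would require the number $k$ of components of $[G:s]$ to satisfy $k \leq n-i$; but for a spanning subgraph with $i$ edges one always has $k \geq n-i$, with equality only when $s$ is a forest. Concretely, for $m=2$ and $G = K_3$, the enhanced state with $s = E(K_3)$ and the single component colored $x$ lies in $C^{3,1}(K_3)$, where $i+j = 4 > 3 = n$. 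So this third inequality is, like the other two, genuinely homological and must also be pushed through the induction; your ``short bookkeeping'' does not establish it. Your derivation of (2) from (1) via the universal coefficient theorem is also too sketchy to stand as a proof: UCT alone does not produce the unit shift in $i+j$ or the exclusion of $i=0$ in the way you describe, and you would need to spell out exactly which coefficient change you are applying and why it detects torsion with the claimed bidegree shift.
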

\begin{rem}\label{rem:focuspart}
    In the present paper we assume the graph to be 1-component and the $\mathbb{Z}$-algebra to be $\mathcal{A}_2$. By Lemma \ref{lem:quote} it suffices to focus on $i,j$ such that $i+j = n-1, n$. In particular, the torsions possibly appear only $i,j$ such that $i+j = n$.
\end{rem}

In the rest of the paper, we assume the graph $G = K_{n-1}^m$ defined as following. For $n \geq 4$ consider the complete graph $K_{n-1}$ and a vertex $\{ n \}$ which is isolated from $K_{n-1}$. For fixed $m$, $1 \leq m \leq n-1$, define $S_m$ as a set $\{ \{ t, n \} \in E(K_{n-1} \wedge \{ n \}) \mid 1 \leq t \leq m \}$. Then, define $K_{n-1}^m = (V(K_{n-1}) \cup \{ n \}, E(K_{n-1}) \cup S_m)$. \\

\section{Construction of cocycles of the chromatic homology}\label{sec:main}
In this section we give generators of the cocycle and show a splitting property of the chromatic homology for the graph $G = K_{n-1}^m$ using the combinatorial description of enhanced states. The following proposition gives the generators of $\ker (\partial_G^{i,j} : C^{i,j}(G) \to C^{i+1,j}(G))$ when $i + j = n$.
\begin{prop}\label{prop:generator}
    Let $G = K_{n-1}^m$ be the graph defined in the previous section. The generators of $\ker (\partial_G^{i, n-i} : C^{i, n-i}(G) \to C^{i+1, n-i}(G))$ are given as follows.
    
    (I) $\ker \partial_G^{0, n} = \left< \overset{x}{P_1} \bl \dots \bl \overset{x}{P_n} \right>$.
    
    (II) $\ker \partial_G^{1,n-1} = \left< \overset{x}{E} \bl \overset{x}{P_1} \bl \dots \bl \overset{x}{P_{n-2}} ; E \subset E(G), \# E = 1 \right>$.
    
    (III) For $i \geq 2$,
    \begin{align*}
        \ker \partial_G^{i, n-i} & =  \left< \overset{x}{E_1} \bl \dots \bl \overset{x}{E_{i}} \bl \overset{x}{P_1} \bl \dots \bl \overset{x}{P_{n - 2i}} ; E_h \subset E(G), \# E_h = 1, h= 1, \dots, i, n - 2i \geq 0 \right>                                       \\
                                 & \cup \left< \sum_{e \in E(G) \setminus \bigcup_{h=1}^{d_1} E_h} \left( E_1 \bl \dots \bl \overset{x}{E_{i_{t}}} \bl \dots \bl E_{d_1} \bl P_1 \bl \dots \bl \overset{x}{P_{k_{t'}}} \bl \dots \bl P_{d_2} \right) \cup e ; \right. \\
                                 & E_h \subset E(G), h = 1, \dots, d_1, \begin{cases}
            1 \leq i_1< \dots < i_{t} < \dots < i_p \leq d_1,   \\
            1 \leq k_1< \dots < k_{t'}< \dots < k_{q} \leq d_2, \\
            p + q = n-i,
        \end{cases}
        \left. \begin{cases}
            d_2 = n - \sum_{h=1}^{d_1} \# V(E_h), \\
            \sum_{h=1}^{d_1} \# E_h = i-1
        \end{cases}
        \right>
    \end{align*}
\end{prop}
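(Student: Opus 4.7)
The plan is to prove (I), (II), and (III) separately. In each case, I would first show the listed elements lie in the kernel and then argue they span it.

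For (I), $C^{0,n}(G)$ is the free module on the unique state in which every vertex is isolated and colored $x$; any $e\in E(G)$ joins two $x$-colored vertices and contributes $x\cdot x=0$, so $\partial_G^{0,n}=0$. For (II), every basis element of $C^{1,n-1}(G)$ has one edge-component $E$ on two vertices together with $n-2$ isolated vertices, all forced to be $x$-colored since the $n-1$ components must all carry color $x$. Since $F_E\setminus E=\emptyset$, any new edge $e\notin s$ must bridge two $x$-colored components and contribute zero, so $\partial_G^{1,n-1}=0$ as well. Both (I) and (II) follow immediately. For (III), I first verify that Type 1 lies in the kernel by the same mechanism: each $E_h$ has only two vertices, so $F_{E_h}\setminus E_h=\emptyset$, and any added edge must bridge two $x$-colored components. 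For Type 2, with the convention that $S\cup e$ denotes the resulting state in canonical (lex-sorted) form, the sign $(-1)^{n(e)}$ appearing in $\partial$ combines with the sign $(-1)^{\#s-n(e)}$ produced by reordering $e$ into its canonical position (via (\ref{eq:anti_commutativity})) to give the uniform identity $\sum_{e\notin s}(S\cup e)=(-1)^{\#s}\,\partial(S)$. Thus each Type 2 generator is $\pm\partial(S)$ for some $S\in C^{i-1,n-i}(G)$ and lies in $\ker\partial_G^{i,n-i}$ by $\partial^2=0$.

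The heart of (III) is the spanning claim. My plan is to stratify basis states of $C^{i,n-i}$ into Type 1 (each $E_h$ a single edge, all components $x$-colored); Type A (all components $x$-colored but some $E_h$ a tree with $\#E_h\geq 2$, necessarily a tree by the balance $\sum\#E_h=i=\sum(\#V(E_h)-1)$ forced when $d_1+d_2=n-i$); and Type B (at least one component colored $1$). Given $T\in\ker\partial_G^{i,n-i}$, I would inductively remove the Type A and Type B summands. For a Type A basis state $S$, choose a leaf edge $f$ of some larger tree $E_h$ and form $S'\in C^{i-1,n-i}(G)$ by deleting $f$, with the freed vertex colored $1$ so that $j(S')=n-i$; the Type 2 generator built from $S'$ then contains $S$ (via reattaching $f$) with coefficient $\pm 1$ plus states of strictly smaller tree complexity. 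For a Type B basis state, swap an $x$-color and a $1$-color along a suitable edge of $K_{n-1}\subset G$, using a Type 2 relation built from the corresponding pre-image in $C^{i-1,n-i}(G)$.

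The principal obstacle is the careful bookkeeping in this reduction: one must verify that the induction measure (tree complexity of Type A summands plus number of $1$-colored components) strictly decreases at each step, that no previously eliminated basis state is reintroduced by subsequent corrections, and that the signs from (\ref{eq:anti_commutativity}) propagate correctly. The restriction to $G=K_{n-1}^m$ enters crucially here, since the completeness of $K_{n-1}$ supplies the abundant pre-images needed to perform the Type A and Type B swaps, while the limited edges to the distinguished vertex $\{n\}$ keep the Case B analysis tractable on the diagonal $i+j=n$ so that the induction terminates in a Type 1 combination.
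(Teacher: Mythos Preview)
Your handling of (I), (II), and the Type~1 part of (III) matches the paper. For the Type~2 generators you take a genuinely shorter route: observing that each one is, up to sign, $\partial_G^{i-1,n-i}(S)$ for the underlying $S\in C^{i-1,n-i}(G)$ and hence annihilated by $\partial^2=0$. The paper instead expands $\partial$ of the Type~2 sum into roughly thirty labelled summands and checks pairwise cancellation via~(\ref{eq:anti_commutativity}), which is in effect a by-hand reverification of $\partial^2=0$ in this special case; your one-line argument buys the same conclusion without that computation. One caution on your sign bookkeeping: the paper's convention is that $S\cup e$ is the \emph{unsigned} enhanced state, with~(\ref{eq:anti_commutativity}) applying only to iterated additions $S\cup e\cdot f$, so your reordering argument does not literally apply. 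In fact the proposition as stated omits the factor $(-1)^{n(e)}$ that the paper itself restores in the last display of its proof and in Proposition~\ref{prop:generator2}; with that factor, the Type~2 generator is literally $\partial(S)$ and no reordering is needed at all.

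For the reverse (spanning) inclusion the paper does not run an inductive reduction. It fixes a cocycle $z=z_0+\cdots+z_l$ with $l$ minimal, argues that $l$ equals the number of edges addable to a common $(i-1)$-state, and then shows the $z_h$ are forced to share $i-1$ edges and a common $x$-pattern by exhibiting surviving boundary terms otherwise. Your stratify-and-reduce plan is a reasonable alternative strategy, but the measure you propose is not obviously monotone: in the Type~A step, when the freed leaf vertex is reattached by some other edge of $\partial(S')$ to a different component $E_k$, that component grows by exactly as much as $E_h$ shrank, so total tree size need not decrease. A lexicographic refinement on the multiset of component sizes would likely repair this, but that is precisely the bookkeeping you flag as the principal obstacle, and it is where the real work of your approach lies; the paper's minimality argument trades this bookkeeping for a less constructive existence argument.
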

\begin{proof}
    To begin with let us check the generators in the list are actually in $\ker \partial^{i,n-i}$. Since the cases (I), (II) are easy to check, we only prove the case (III). It is obvious that
    \begin{align*}
        \partial_G^{i, n-i} \left( \overset{x}{E_1} \bl \dots \bl \overset{x}{E_{i}} \bl \overset{x}{P_1} \bl \dots \bl \overset{x}{P_{n - 2i}} \right) = 0,
    \end{align*}
    when $n - 2i \geq 0$ and $\# E_h = 1$, $h = 1, \dots, i$, since edges adding to the enhanced state $\overset{x}{E_1} \bl \dots \bl \overset{x}{E_{i}} \bl \overset{x}{P_1} \bl \dots \bl \overset{x}{P_{n - 2i}}$ are all bridges connecting two components labeled by $x$. 
    
    Next, let us compute 
    \begin{align*}
        \partial_G^{i,n-i} \left( \sum_{e \in E(G) \setminus \bigcup_{h=1}^{d_1} E_h} \left( E_1 \bl \dots \bl \overset{x}{E_{i_{t}}} \bl \dots \bl E_{d_1} \bl P_1 \bl \dots \bl \overset{x}{P_{k_{t'}}} \bl \dots \bl P_{d_2} \right) \cup e \right).
    \end{align*}
    Remark that we have
    \begin{align}
         & \sum_{e \in E(G) \setminus \bigcup_{h=1}^{d_1} E_h} \left( E_1 \bl \dots \bl \overset{x}{E_{i_{t}}} \bl \dots \bl E_{d_1} \bl P_1 \bl \dots \bl \overset{x}{P_{k_{t'}}} \bl \dots \bl P_{d_2} \right) \cup e                                                                                  \\
         & = \sum_{1 \leq a \leq d_1} \sum_{\substack{e \in F_{E_a} \setminus E_a}} \overset{(x)}{E_{a}^e} \cup e + \sum_{1 \leq a<b \leq d_1} \sum_{\substack{e \in F_{E_a} \wedge F_{E_b}                    \setminus F_{E_a} \cup F_{E_b}}} \overset{(x)}{(E_a  E_b)^e} \cup e \label{eq:boundary_1} \\
         & + \sum_{\substack{1 \leq a \leq d_1                                                                                                                                                                                                                                                           \\ 1 \leq \alpha \leq d_2}} \sum_{\substack{e \in F_{E_a} \wedge P_\alpha \setminus F_{E_a}}} \overset{(x)}{(E_a P_\alpha)^e} \cup e +  \sum_{\substack{1 \leq \alpha < \beta \leq d_2 \\ e = \{ P_\alpha, P_\beta \}}}  \overset{(x)}{(P_\alpha P_\beta)^e} \cup e \label{eq:boundary_2}.
    \end{align}
    Let us compute the boundary of each term of (\ref{eq:boundary_1}) and (\ref{eq:boundary_2}). In the rest of this computation, we omit the color $x$ for simplicity, which does not affect any of the computations. 
    \begin{align}
         & \partial^{i,n-i} \left( \sum_{1 \leq a \leq d_1} \sum_{\substack{e \in F_{E_a} \setminus E_a}} \overset{(x)}{E_{a}^e} \cup e \right)  = \sum_{1 \leq a \leq d_1} \sum_{\substack{e \in F_{E_a} \setminus E_a}} \partial^{i,n-i} \left( \overset{(x)}{E_{a}^e} \cup e \right) \\
         & = \sum_{\substack {1 \leq a,b \leq d_1                                                                                                                                                                                                                                       \\ a \neq b}} \sum_{\substack{e \in F_{E_a} \setminus E_a \\ f \in F_{E_b} \setminus E_b}} (-1)^{n(f)} \left( E_a^e \bl E_b^f \right) \cup e \cdot f \label{eq:I-I-I} \\
         & + \sum_{1 \leq a \leq d_1} \sum_{\substack{e, f \in F_{E_a} \setminus E_a                                                                                                                                                                                                    \\ e \neq f}} (-1)^{n(f)} \left( E_a^{e,f} \bl \right) \cup e \cdot f \label{eq:I-I-II}                                                                                                                                                                                                                                                            \\
         & + \sum_{\substack{1 \leq a,b,c \leq d_1                                                                                                                                                                                                                                      \\ b<c \\ b, c \neq a}} \sum_{\substack{e \in F_{E_a} \setminus E_a \\ f \in F_{E_b} \wedge F_{E_c} \setminus F_{E_b} \cup F_{E_c}}} (-1)^{n(f)} \left( E_a^e \bl \left( E_b  E_c \right)^f \right) \cup e \cdot f \label{eq:I-II-I} \\
         & + \sum_{\substack{1 \leq a,b \leq d_1                                                                                                                                                                                                                                        \\ a \neq b}} \sum_{\substack{e \in F_{E_a} \setminus E_a \\ f \in F_{E_a} \wedge F_{E_b} \setminus F_{E_a} \cup F_{E_b}}} (-1)^{n(f)} \left( \left( E_a^e  E_b \right)^f \bl \right) \cup e \cdot f \label{eq:I-II-II} \\
         & + \sum_{\substack{1 \leq a,b \leq d_1                                                                                                                                                                                                                                        \\ a \neq b \\ 1 \leq \alpha \leq d_2}} \sum_{\substack{e \in F_{E_a} \setminus E_a \\ f \in F_{E_b} \wedge P_\alpha \setminus F_{E_\alpha}}} (-1)^{n(f)} \left( E_a^e \bl (E_b  P_\alpha)^f \right) \cup e \cdot f \label{eq:I-III-I} \\
         & + \sum_{\substack{1 \leq a \leq d_1                                                                                                                                                                                                                                          \\ 1 \leq \alpha \leq d_2}} \sum_{\substack{e \in F_{E_a} \setminus E_a \\ f \in F_{E_a} \wedge P_\alpha \setminus F_{E_a}}} (-1)^{n(f)} \left( (E_a^e  P_\alpha)^f \bl \right) \cup e \cdot f                                                     \label{eq:I-III-II} \\
         & +  \sum_{\substack{1 \leq a \leq d_1                                                                                                                                                                                                                                         \\ 1 \leq \alpha < \beta \leq d_2 \\                                                                                                                                                                                                                               }} \sum_{\substack{e \in F_{E_a} \setminus E_a \\ f = \{ P_\alpha, P_\beta \}}} (-1)^{n(f)} \left( E_a^e \bl \left( P_\alpha P_\beta \right)^f \right) \cup e \cdot f \label{eq:I-IV}
    \end{align}
    
    \begin{align}
         & \partial^{i,n-i} \left( \sum_{1 \leq a<b \leq d_1} \sum_{\substack{e \in F_{E_a} \wedge F_{E_b}  \setminus F_{E_a} \cup F_{E_b}}} \left( E_a  E_b \right)^e \cup e \right) = \sum_{1 \leq a<b \leq d_1} \sum_{\substack{e \in F_{E_a} \wedge F_{E_b} \setminus F_{E_a} \cup F_{E_b}}} \partial^{i,n-i} \left( \left( E_a E_b \right)^e \cup e \right)                                                                                   \\
         & = \sum_{\substack{1 \leq a<b \leq d_1                                                                                                                                                                                                                                                                                                                                                                                                   \\ 1 \leq c \leq d_1 \\ c \neq a,b}} \sum_{\substack{e \in F_{E_a} \wedge F_{E_b}                      \setminus F_{E_a} \cup F_{E_b} \\ f \in F_{E_c} \setminus E_c}} (-1)^{n(f)} \left( \left( E_a E_b \right)^e \bl E_c^f \right) \cup e \cdot f  \label{eq:II-I-I} \\
         & + \sum_{1 \leq a<b \leq d_1} \sum_{\substack{e \in F_{E_a} \wedge F_{E_b}                                                                                                                                                                                                                                                                                                                                \setminus F_{E_a} \cup F_{E_b} \\ f \in F_{E_b} \setminus E_b}} (-1)^{n(f)} \left( \left( E_a E_b^f \right)^{e} \bl \right) \cup e \cdot f  \label{eq:II-I-II-I} \\
         & + \sum_{1 \leq a<b \leq d_1} \sum_{\substack{e \in F_{E_a} \wedge F_{E_b}                                                                                                                                                                                                                                                                                                \setminus F_{E_a} \cup F_{E_b}                                 \\ f \in F_{E_a} \setminus E_a}} (-1)^{n(f)} \left( \left( E_a^f  E_b \right)^e \bl \right) \cup e \cdot f  \label{eq:II-I-II-II} \\
         & + \sum_{1 \leq a<b \leq d_1} \sum_{\substack{e, f \in F_{E_a} \wedge F_{E_b}                                                                                                                                                                                                                                                                                             \setminus F_{E_a} \cup F_{E_b}                                 \\ e \neq f}} (-1)^{n(f)} \left( \left( E_a E_b \right)^{e,f} \bl \right) \cup e \cdot f  \label{eq:II-I-II-III} \\
         & + \sum_{\substack{1 \leq a<b \leq d_1                                                                                                                                                                                                                                                                                                                                                                                                   \\ 1 \leq c < d \leq d_1 \\ (a, b) \neq (c, d)}} \sum_{\substack{e \in F_{E_a} \wedge F_{E_b}                    \setminus F_{E_a} \cup F_{E_b} \\ f \in F_{E_c} \wedge F_{E_d}  \setminus F_{E_c} \cup F_{E_d}}} (-1)^{n(f)} \left( \left( E_a E_b \right)^e \bl \left( E_c E_d \right)^f \right) \cup e \cdot f  \label{eq:II-II-I} \\ 
         & + \sum_{\substack{1 \leq a<b \leq d_1                                                                                                                                                                                                                                                                                                                                                                                                   \\ 1 \leq c \leq d_1 \\ c \neq a, b}} \sum_{\substack{e \in F_{E_a} \wedge F_{E_b}           \setminus F_{E_a} \cup F_{E_b} \\ f \in F_{E_a} \wedge F_{E_c} \setminus F_{E_a} \cup F_{E_c}}} (-1)^{n(f)} \left( \left( E_a E_b \right)^e  \left( E_a E_c  \right)^f \bl \right) \cup e \cdot f  \label{eq:II-II-II-a} \\
         & + \sum_{\substack{1 \leq a<b \leq d_1                                                                                                                                                                                                                                                                                                                                                                                                   \\ 1 \leq c \leq d_1 \\ c \neq a, b}} \sum_{\substack{e \in F_{E_a} \wedge F_{E_b}           \setminus F_{E_a} \cup F_{E_b} \\ f \in F_{E_b} \wedge F_{E_c} \setminus F_{E_b} \cup F_{E_c}}} (-1)^{n(f)} \left( \left( E_a E_b \right)^e  \left( E_b E_c  \right)^f \bl \right) \cup e \cdot f  \label{eq:II-II-II-b} \\
         & + \sum_{\substack{1 \leq a<b \leq d_1                                                                                                                                                                                                                                                                                                                                                                                                   \\ 1 \leq c \leq d_1 \\ c \neq a,b \\ 1 \leq \alpha \leq d_2}} \sum_{\substack{e \in F_{E_a} \wedge F_{E_b}                      \setminus F_{E_a} \cup F_{E_b} \\ f \in F_{E_c} \wedge P_\alpha \setminus F_{E_c}}} (-1)^{n(f)} \left( \left( E_a E_b \right)^e \bl \left( E_c  P_\alpha \right)^f \right) \cup e \cdot f  \label{eq:II-III-I} \\
         & + \sum_{\substack{1 \leq a<b \leq d_1                                                                                                                                                                                                                                                                                                                                                                                                   \\ 1 \leq \alpha \leq d_2}} \sum_{\substack{e \in F_{E_a} \wedge F_{E_b}                      \setminus F_{E_a} \cup F_{E_b} \\ f \in F_{E_a} \wedge P_\alpha \setminus F_{E_a}}} (-1)^{n(f)} \left( \left( E_a E_b \right)^e \left( E_a P_\alpha \right)^f \bl \right) \cup e \cdot f  \label{eq:II-III-II-a} \\
         & + \sum_{\substack{1 \leq a<b \leq d_1                                                                                                                                                                                                                                                                                                                                                                                                   \\ 1 \leq \alpha \leq d_2}} \sum_{\substack{e \in F_{E_a} \wedge F_{E_b}                      \setminus F_{E_a} \cup F_{E_b} \\ f \in F_{E_b} \wedge P_\alpha \setminus F_{E_b}}} (-1)^{n(f)} \left( \left( E_a E_b \right)^e \left( E_b P_\alpha \right)^f \bl \right) \cup e \cdot f  \label{eq:II-III-II-b} \\
         & + \sum_{\substack{1 \leq a<b \leq d_1                                                                                                                                                                                                                                                                                                                                                                                                   \\ 1 \leq \alpha<\beta \leq d_2}} \sum_{\substack{e \in F_{E_a} \wedge F_{E_b}                      \setminus F_{E_a} \cup F_{E_b} \\ f = \{ P_\alpha, P_\beta \}}} (-1)^{n(f)} \left( \left( E_a E_b \right)^e \bl \left( P_\alpha P_\beta \right)^f \right) \cup e \cdot f \label{eq:II-IV}
    \end{align}
    
    \begin{align}
         & \partial^{i,n-i} \left( \sum_{\substack{1 \leq a \leq d_1 \\ 1 \leq \alpha \leq d_2}} \sum_{\substack{e \in F_{E_a} \wedge P_\alpha \setminus F_{E_a}}} \left( E_a P_\alpha \right)^e  \cup e \right)  \\
         & = \sum_{\substack{1 \leq a \leq d_1                       \\ 1 \leq \alpha \leq d_2}} \sum_{\substack{e \in F_{E_a} \wedge P_\alpha \setminus F_{E_a}}} \partial^{i,n-i} \left( \left( E_a P_\alpha \right)^e \cup e \right) \\
         & = \sum_{\substack{1 \leq a,b \leq d_1                     \\ a \neq b                  \\ 1 \leq \alpha \leq d_2 }} \sum_{\substack{e \in F_{E_a} \wedge P_\alpha \setminus F_{E_a} \\ f \in F_{E_b} \setminus E_b}} (-1)^{n(f)} \left( \left( E_a P_\alpha \right)^e \bl E_b^f \right) \cup e \cdot f \label{eq:III-I-I}  \\
         & + \sum_{\substack{1 \leq a \leq d_1                       \\ 1 \leq \alpha \leq d_2}} \sum_{\substack{e \in E_a \wedge P_\alpha \setminus F_{E_a} \\ f \in F_{E_a} \setminus E_a}} (-1)^{n(f)} \left( \left( E_a^f P_\alpha \right)^e \bl \right) \cup e \cdot f \label{eq:wasure}\\
         & + \sum_{\substack{1 \leq a \leq d_1                       \\ 1 \leq \alpha \leq d_2}} \sum_{\substack{e, f \in F_{E_a} \wedge P_\alpha \setminus F_{E_a} \\ e \neq f}} (-1)^{n(f)} \left( \left( E_a P_\alpha \right)^{e,f} \bl \right) \cup e \cdot f \label{eq:III-I-II} \\
         & + \sum_{\substack{1 \leq a \leq d_1                       \\ 1 \leq b < c \leq d_1 \\ b,c \neq a                  \\ 1 \leq \alpha \leq d_2}} \sum_{\substack{e \in F_{E_a} \wedge P_\alpha \setminus F_{E_a} \\ f \in F_{E_b} \wedge F_{E_c} \setminus F_{E_b} \cup F_{E_c} }} (-1)^{n(f)} \left( \left( E_a P_\alpha \right)^e \bl \left( E_b E_c \right)^f \right) \cup e \cdot f \label{eq:III-II-I} \\
         & + \sum_{\substack{1 \leq a,b \leq d_1                     \\ a \neq b               \\ 1 \leq \alpha \leq d_2}} \sum_{\substack{e \in F_{E_a} \wedge P_\alpha \setminus F_{E_a} \\ f \in F_{E_a} \wedge F_{E_b} \setminus F_{E_a} \cup F_{E_b}}} (-1)^{n(f)} \left( \left( E_a P_\alpha \right)^e \left( E_a E_b \right)^f \bl \right) \cup e \cdot f \label{eq:III-II-II} \\
         & + \sum_{\substack{1 \leq a, b \leq d_1                    \\ a \neq b \\ 1 \leq \alpha, \beta \leq d_2 \\ \alpha \neq \beta}} \sum_{\substack{e \in F_{E_a} \wedge P_\alpha \setminus F_{E_a} \\ f \in F_{E_b} \wedge P_\beta \setminus F_{E_b}}} (-1)^{n(f)} \left( \left( E_a P_\alpha \right)^e \bl \left( E_b P_\beta \right)^f \right) \cup e \cdot f \label{eq:III-III-I} \\
         & + \sum_{\substack{1 \leq a \leq d_1                       \\ 1 \leq \alpha, \beta \leq d_2 \\ \alpha \neq \beta}} \sum_{\substack{e \in F_{E_a} \wedge P_\alpha \setminus F_{E_a} \\ f \in E_a \wedge P_\beta \setminus F_{E_a}}} (-1)^{n(f)} \left( \left( E_a P_\alpha \right)^e \left( E_a P_\beta \right)^f \bl \right) \cup e \cdot f \label{eq:III-III-II} \\
         & + \sum_{\substack{1 \leq a \leq d_1                       \\ 1 \leq \alpha \leq d_2 \\ 1 \leq \beta < \gamma \leq d_2 \\ \beta, \gamma \neq \alpha}} \sum_{\substack{e \in F_{E_a} \wedge P_\alpha \setminus F_{E_a} \\ f = \{ P_\beta, P_\gamma \}}} (-1)^{n(f)} \left( \left( E_a P_\alpha \right)^e \bl \left( P_\beta P_\gamma \right)^f  \right) \cup e \cdot f \label{eq:III-IV-I} \\
         & + \sum_{\substack{1 \leq a \leq d_1                       \\ 1 \leq \alpha<\beta \leq d_2}} \sum_{\substack{e \in F_{E_a} \wedge P_\alpha \setminus F_{E_a} \\ f = \{ P_\alpha, P_\beta \}}} (-1)^{n(f)} \left( \left( E_a P_\alpha \right)^e \left( P_\alpha P_\beta \right)^f  \bl \right) \cup e \cdot f \label{eq:III-IV-II} 
    \end{align}
    
    \begin{align}
         & \partial^{i,n-i} \left( \sum_{\substack{1 \leq \alpha < \beta \leq d_2 \\ e = \{ P_\alpha, P_\beta \}}} \left( P_\alpha P_\beta \right)^e \cup e \right)                                                                                                                                        \\
         & = \sum_{\substack{1 \leq \alpha < \beta \leq d_2                       \\ e = \{ P_\alpha, P_\beta \} \\}} \partial^{i,n-i} \left( \left( P_\alpha P_\beta \right)^e \cup e \right)                                                                                                                                      \\
         & = \sum_{\substack{1 \leq a \leq d_1                                    \\ 1 \leq \alpha < \beta \leq d_2}} \sum_{\substack{ e = \{ P_\alpha, P_\beta \} \\ f \in F_{E_a} \setminus E_a}} (-1)^{n(f)} \left( E_a^f \bl \left( P_\alpha P_\beta \right)^e \right) \cup e \cdot f \label{eq:IV-I}                                         \\ 
         & + \sum_{\substack{1 \leq a<b \leq d_1                                  \\ 1 \leq \alpha < \beta \leq d_2}} \sum_{\substack{e = \{ P_\alpha, P_\beta \} \\ f \in F_{E_a} \wedge F_{E_b}                                                                                                                                          \setminus F_{E_a} \cup F_{E_b}}} (-1)^{n(f)} \left( \left( E_a E_b \right)^f \bl \left( P_\alpha P_\beta \right)^e \right) \cup e \cdot f  \label{eq:IV-II} \\
         & + \sum_{\substack{1 \leq a \leq d_1                                    \\ 1 \leq \alpha < \beta \leq d_2 \\ 1 \leq \gamma \leq d_1 \\ \alpha, \beta \neq \gamma}}  \sum_{\substack{e = \{ P_\alpha, P_\beta \} \\f \in F_{E_a} \wedge P_\alpha \setminus F_{E_a}}} (-1)^{n(f)} \left( \left( E_a P_\gamma \right)^f \bl \left( P_\alpha P_\beta \right)^e \right) \cup e \cdot f  \label{eq:IV-III-I} \\
         & + \sum_{\substack{1 \leq a \leq d_1                                    \\ 1 \leq \alpha < \beta \leq d_2}} \sum_{\substack{e = \{ P_\alpha, P_\beta \} \\ f \in F_{E_a} \wedge P_\alpha \setminus F_{E_a}}} (-1)^{n(f)} \left( \left( E_a  P_\alpha \right)^f \left( P_\alpha P_\beta \right)^e \bl \right) \cup e \cdot f \label{eq:IV-III-II}    \\
         & + \sum_{\substack{1 \leq \alpha < \beta \leq d_2                       \\ 1 \leq \gamma < \delta \leq d_2 \\ (\alpha, \beta) \neq (\gamma, \delta) \\e = \{ P_\alpha, P_\beta \}, f = \{ P_\gamma, P_\delta \}}} (-1)^{n(f)} \left( \left( P_\alpha P_\beta \right)^e \bl \left( P_\gamma P_\delta \right)^f \right) \cup e \cdot f \label{eq:IV-IV-I} \\
         & + \sum_{\substack{1 \leq \alpha < \beta \leq d_2                       \\ 1 \leq \gamma \leq d_2 \\ \gamma \neq \alpha,\beta \\ e = \{ P_\alpha, P_\beta \}, f = \{ P_\alpha, P_\gamma \}}} (-1)^{n(f)} \left( \left( P_\alpha P_\beta \right)^e \left( P_\alpha P_\gamma \right)^f \bl \right) \cup e \cdot f \label{eq:IV-IV-II} \\
         & + \sum_{\substack{1 \leq \alpha < \beta \leq d_2                       \\ 1 \leq \gamma \leq d_2 \\ \gamma \neq \alpha,\beta \\ e = \{ P_\alpha, P_\beta \}, f = \{ P_\beta, P_\gamma \}}} (-1)^{n(f)} \left( \left( P_\alpha P_\beta \right)^e \left( P_\beta P_\gamma \right)^f \bl \right) \cup e \cdot f  \label{eq:IV-IV-III}
    \end{align}
    By the following computations we can see that 
    \begin{equation*}
        \partial^{i,n-i} \left( \sum_{e \in E(G) \setminus \bigcup_{h=1}^{d_1} E_h} \left( E_1 \bl \dots \bl \overset{x}{E_{i_{t}}} \bl \dots \bl E_{d_1} \bl P_1 \bl \dots \bl \overset{x}{P_{k_{t'}}} \bl \dots \bl P_{d_2} \right) \cup e  \right) = 0.
    \end{equation*}
    
    \begin{align*}
        (\ref{eq:I-I-I}) & = \sum_{\substack{1 \leq a<b \leq d_1}} \sum_{\substack{e \in F_{E_a} \setminus E_a \\ f \in F_{E_b} \setminus E_b}} (-1)^{n(f)} \left( \left( E_a^e \bl E_b^f \right) \cup e \cdot f + \left( E_a^e \bl E_b^f \right) \cup f \cdot e \right) \\
                         & = 0.
    \end{align*}
    
    \begin{align*}
        (\ref{eq:I-I-II}) & = \sum_{1 \leq a \leq d_1} \sum_{\substack{e \in F_{E_a} \setminus E_a \\ f \in F_{E_a} \setminus E_a \cup \{ e \}}} (-1)^{n(f)} \left( \left( E_a^{e,f} \bl \right) \cup e \cdot f + \left( E_a^{e,f} \bl \right) \cup f \cdot e \right) \\
                          & = 0.
    \end{align*}
    
    \begin{align*}
        (\ref{eq:I-II-I}) + (\ref{eq:II-I-I}) & = \sum_{1 \leq a < b < c \leq d_1} \sum_{\substack{e \in F_{E_a} \setminus E_a \\ f \in F_{E_b} \wedge F_{E_c} \setminus F_{E_b} \cup F_{E_c}}} (-1)^{n(f)} \left( \left( E_a^e \bl \left( E_b E_c \right)^f \right) \cup e \cdot f + \left( E_a^e \bl \left( E_b E_c \right)^f \right) \cup f \cdot e \right) \\
                                              & + \sum_{1 \leq b < a < c \leq d_1} \sum_{\substack{e \in F_{E_a} \setminus E_a \\ f \in F_{E_b} \wedge F_{E_c} \setminus F_{E_b} \cup F_{E_c}}} (-1)^{n(f)} \left( \left( E_a^e \bl \left( E_b E_c \right)^f \right) \cup e \cdot f + \left( E_a^e \bl \left( E_b E_c \right)^f \right) \cup f \cdot e \right) \\
                                              & + \sum_{1 \leq b < c < a \leq d_1} \sum_{\substack{e \in F_{E_a} \setminus E_a \\ f \in F_{E_b} \wedge F_{E_c} \setminus F_{E_b} \cup F_{E_c}}} (-1)^{n(f)} \left( \left( E_a^e \bl \left( E_b E_c \right)^f \right) \cup e \cdot f + \left( E_a^e \bl \left( E_b E_c \right)^f \right) \cup f \cdot e \right) \\
                                              & = 0
    \end{align*}
    
    \begin{align*}
        (\ref{eq:I-II-II}) + (\ref{eq:II-I-II-I}) + (\ref{eq:II-I-II-II}) & = \sum_{1 \leq a < b \leq d_1} \sum_{\substack{e \in F_{E_a} \setminus E_a \\ f \in F_{E_a} \wedge F_{E_b} \setminus F_{E_a} \cup F_{E_b}}} (-1)^{n(f)} \left( \left( \left( E_a^e E_b \right)^f \bl \right) \cup e \cdot f + \left( \left( E_a^e E_b \right)^f \bl \right) \cup f \cdot e \right) \\
                                                                          & + \sum_{1 \leq a < b \leq d_1} \sum_{\substack{e \in F_{E_a} \setminus E_a \\ f \in F_{E_a} \wedge F_{E_b} \setminus F_{E_a} \cup F_{E_b}}} (-1)^{n(f)} \left( \left( \left( E_a E_b^e \right)^f \bl \right) \cup e \cdot f + \left( \left( E_a E_b^e \right)^f \bl \right) \cup f \cdot e \right) \\
                                                                          & = 0
    \end{align*}
    \begin{align*}
        (\ref{eq:I-III-I}) + (\ref{eq:III-I-I}) & = \sum_{\substack{1 \leq a,b \leq d_1 \\ a \neq b \\ 1 \leq \alpha \leq d_2}} \sum_{\substack{e \in F_{E_a} \setminus E_a \\ f \in E_b \wedge P_\alpha \setminus F_{E_b}}} (-1)^{n(f)} \left( \left( E_a^e \bl (E_b  P_\alpha)^f \right) \cup e \cdot f + \left( E_a^e \bl (E_b  P_\alpha)^f \right) \cup f \cdot e \right) \\
                                                & = 0
    \end{align*}
    \begin{align*}
        (\ref{eq:I-III-II}) + (\ref{eq:wasure}) & = \sum_{\substack{1 \leq a \leq d_1         \\ 1 \leq \alpha \leq d_2}} \sum_{\substack{e \in F_{E_a} \setminus E_a \\ f \in F_{E_a} \wedge P_\alpha \setminus F_{E_a}}} (-1)^{n(f)} \left( \left( \left( E_a^e  P_\alpha \right)^f \bl \right) \cup e \cdot f + \left( \left( E_a^e  P_\alpha \right)^f \bl \right) \cup f \cdot e \right) \\
                                                & = 0                                        
    \end{align*}
    
    \begin{align*}
        (\ref{eq:I-IV}) + (\ref{eq:IV-I}) & = \sum_{\substack{1 \leq a \leq d_1 \\ 1 \leq \alpha < \beta \leq d_2}} \sum_{\substack{e \in F_{E_a} \setminus E_a \\ f = \{ P_\alpha, P_\beta \}}} (-1)^{n(f)} \left( \left( E_a^e \bl \left( P_\alpha P_\beta \right)^f \right) \cup e \cdot f + \left( E_a^e \bl \left( P_\alpha P_\beta \right)^f \right) \cup f \cdot e \right) \\
                                          & = 0
    \end{align*}
    \begin{align*}
        (\ref{eq:II-I-II-III}) & =  \sum_{1 \leq a < b \leq d_1} \sum_{\substack{e \in F_{E_a} \wedge F_{E_b} \setminus F_{E_a} \cup F_{E_b} \\ f \in F_{E_a} \wedge F_{E_b} \setminus F_{E_a} \cup F_{E_b} \cup \{ e \}}} \left( (-1)^{n(f)} \left( \left( E_a E_b \right)^{e,f} \bl \right) \cup e \cdot f +  \left( \left( E_a E_b \right)^{e,f} \bl \right) \cup f \cdot e \right) \\
                               & = 0
    \end{align*}
    \begin{align*}
        (\ref{eq:II-II-I}) & = \sum_{\substack{1 \leq a<b<c<d \leq d_1 }} \sum_{\substack{e \in F_{E_a} \wedge F_{E_b} \setminus E_a \cup E_b  \\ f \in F_{E_c} \wedge F_{E_d} \setminus E_c \cup E_d}} (-1)^{n(f)} \left( \left( \left( E_a E_b \right)^e \bl \left( E_c E_d \right)^f \right) \cup e \cdot f + \left( \left( E_a E_b \right)^e \bl \left( E_c E_d \right)^f \right) \cup f \cdot e \right) \\
                           & + \sum_{\substack{1 \leq a< c< b<d \leq d_1}} \sum_{\substack{e \in F_{E_a} \wedge F_{E_b} \setminus E_a \cup E_b \\  f \in F_{E_c} \wedge F_{E_d} \setminus E_c \cup E_d}} (-1)^{n(f)} \left( \left( \left( E_a E_b \right)^e \bl \left( E_c E_d \right)^f \right) \cup e \cdot f + \left( \left( E_a E_b \right)^e \bl \left( E_c E_d \right)^f \right) \cup f \cdot e \right) \\
                           & + \sum_{\substack{1 \leq c< a< b<d \leq d_1}} \sum_{\substack{e \in F_{E_a} \wedge F_{E_b} \setminus E_a \cup E_b \\  f \in F_{E_c} \wedge F_{E_d} \setminus E_c \cup E_d}} (-1)^{n(f)} \left( \left( \left( E_a E_b \right)^e \bl \left( E_c E_d \right)^f \right) \cup e \cdot f + \left( \left( E_a E_b \right)^e \bl \left( E_c E_d \right)^f \right) \cup f \cdot e \right) \\
                           & = 0
    \end{align*}
    \begin{align*}
        (\ref{eq:II-II-II-a}) + (\ref{eq:II-II-II-b}) & = \sum_{1 \leq a < b < c \leq d_1} \left( \sum_{\substack{e \in F_{E_a} \wedge F_{E_b} \setminus F_{E_a} \cup F_{E_b} \\ f \in F_{E_a} \wedge F_{E_c} \setminus F_{E_a} \cup F_{E_c}}} (-1)^{n(f)} \left( \left( \left( E_a E_b \right)^e \left( E_a E_c \right)^f \bl \right) \cup e \cdot f + \left( \left( E_a E_b \right)^e \left( E_a E_c \right)^f \bl \right) \cup f \cdot e \right) \right. \\
                                                      & + \sum_{\substack{e \in F_{E_b} \wedge F_{E_c} \setminus F_{E_b} \cup F_{E_c}                                         \\ f \in F_{E_a} \wedge F_{E_b} \setminus F_{E_a} \cup F_{E_b}}} (-1)^{n(f)} \left( \left( \left( E_a E_b \right)^e \left( E_a E_c \right)^f \bl \right) \cup e \cdot f + \left( \left( E_a E_b \right)^e \left( E_a E_c \right)^f \bl \right) \cup f \cdot e \right) \\
                                                      & + \left. \sum_{\substack{e \in F_{E_a} \wedge F_{E_c} \setminus F_{E_a} \cup F_{E_c}                                  \\ f \in F_{E_a} \wedge F_{E_b} \setminus F_{E_a} \cup F_{E_b}}} (-1)^{n(f)} \left( \left( \left( E_a E_b \right)^e \left( E_a E_c \right)^f \bl \right) \cup e \cdot f + \left( \left( E_a E_b \right)^e \left( E_a E_c \right)^f \bl \right) \cup f \cdot e \right) \right) \\
                                                      & = 0
    \end{align*}
    \begin{align*}
        (\ref{eq:II-III-I}) + (\ref{eq:III-II-I}) & = \sum_{\substack{1 \leq a < b < d_1 \\ 1 \leq c \leq d_1 \\ c \neq a, b \\ 1 \leq \alpha \leq d_2}} \sum_{\substack{e \in F_{E_a} \wedge F_{E_b} \setminus F_{E_a} \cup F_{E_b} \\ f \in F_{E_c} \wedge P_\alpha \setminus F_{E_c}}} (-1)^{n(f)} \left( \left( \left( E_a E_b \right)^e \bl \left( E_c P_\alpha \right)^f \right) \cup e \cdot f + \left( \left( E_a E_b \right)^e \bl \left( E_c P_\alpha \right)^f \right) \cup f \cdot e \right) \\
                                                  & = 0
    \end{align*}
    
    \begin{align*}
        (\ref{eq:II-III-II-a}) + (\ref{eq:II-III-II-b}) + (\ref{eq:III-II-II}) & = \sum_{\substack{1 \leq a<b \leq d_1                                                                    \\ 1 \leq \alpha \leq d_2}} \left( \sum_{\substack{e \in F_{E_a} \wedge F_{E_b}                     \setminus F_{E_a} \cup F_{E_b} \\ f \in F_{E_a} \wedge P_\alpha \setminus F_{E_a}}}  (-1)^{n(f)} \left( \left( \left( E_a  E_b \right)^e \left( E_a P_\alpha \right)^f \bl \right) \cup e \cdot f + \left( \left( E_a  E_b \right)^e \left( E_a P_\alpha \right)^f \bl \right) \cup f \cdot e \right) \right. \\
                                                                               & + \left. \sum_{\substack{e \in F_{E_a} \wedge F_{E_b}                     \setminus F_{E_a} \cup F_{E_b} \\ f \in F_{E_b} \wedge P_\alpha \setminus F_{E_b}}}  \left( \left( \left( E_a  E_b \right)^e \left( E_b P_\alpha \right)^f \bl \right) \cup e \cdot f + \left( \left( E_a  E_b \right)^e \left( E_b P_\alpha \right)^f \bl \right) \cup f \cdot e \right) \right) \\
                                                                               & = 0
    \end{align*}
    \begin{align*}
        (\ref{eq:II-IV}) + (\ref{eq:IV-II}) & = \sum_{\substack{1 \leq a<b \leq d_1 \\ 1 \leq \alpha < \beta \leq d_2}} \sum_{\substack{e \in F_{E_a} \wedge F_{E_b} \setminus F_{E_a} \cup F_{E_b} \\ f = \{ P_\alpha, P_\beta \}}} \left( \left( \left( E_a E_b \right)^e \bl  \left( P_\alpha P_\beta \right)^f \right) \cup e \cdot f + \left( \left( E_a E_b \right)^e \bl  \left( P_\alpha P_\beta \right)^f \right) \cup f \cdot e \right) \\
                                            & = 0
    \end{align*}
    \begin{align*}
        (\ref{eq:III-I-II}) & = \sum_{\substack{1 \leq a \leq d_1 \\ 1 \leq \alpha \leq d_2}} \sum_{\substack{e \in F_{E_a} \wedge P_\alpha \setminus F_{E_a} \\ f \in F_{E_a} \wedge P_\alpha \setminus F_{E_a} \cup \{ e \}}} (-1)^{n(f)} \left( \left( \left( E_a P_\alpha \right)^{e,f} \bl \right) \cup e \cdot f + \left( \left( E_a P_\alpha \right)^{e,f} \bl \right) \cup f \cdot e \right) \\
                            & = 0
    \end{align*}
    \begin{align*}
        (\ref{eq:III-III-I}) & = \sum_{\substack{1 \leq a < b \leq d_1 \\ 1 \leq \alpha < \beta \leq d_2}} \sum_{\substack{e \in E_a \wedge P_\alpha \setminus F_{E_a} \\ f \in E_b \wedge P_\beta \setminus F_{E_b}}} (-1)^{n(f)} \left( \left( \left( E_a P_\alpha \right)^e \bl \left( E_b P_\beta \right)^f \right) \cup e \cdot f + \left( \left( E_a P_\alpha \right)^e \bl \left( E_b P_\beta \right)^f \right) \cup f \cdot e \right) \\
                             & + \sum_{\substack{1 \leq a < b \leq d_1 \\ 1 \leq \beta < \alpha \leq d_2}} \sum_{\substack{e \in E_a \wedge P_\alpha \setminus F_{E_a} \\ f \in E_b \wedge P_\beta \setminus F_{E_b}}} (-1)^{n(f)} \left( \left( \left( E_a P_\alpha \right)^e \bl \left( E_b P_\beta \right)^f \right) \cup e \cdot f + \left( \left( E_a P_\alpha \right)^e \bl \left( E_b P_\beta \right)^f \right) \cup f \cdot e \right) \\
                             & = 0
    \end{align*}
    \begin{align*}
        (\ref{eq:III-III-II}) & = \sum_{\substack{1 \leq a \leq d_1 \\ 1 \leq \alpha < \beta \leq d_2}} \sum_{\substack{e \in F_{E_a} \wedge P_\alpha \setminus F_{E_a} \\ f \in F_{E_a} \wedge P_\beta \setminus F_{E_a}}} (-1)^{n(f)} \left( \left( \left( E_a P_\alpha \right)^e \left( E_a P_\beta \right)^f \bl \right) \cup e \cdot f + \left( \left( E_a P_\alpha \right)^e \left( E_a P_\beta \right)^f \bl \right) \cup f \cdot e \right) \\
                              & = 0
    \end{align*}
    
    \begin{align*}
        (\ref{eq:III-IV-I}) + (\ref{eq:IV-III-I}) & = \sum_{\substack{1 \leq a \leq d_1 \\ 1 \leq \alpha \leq d_2 \\ 1 \leq \beta<\gamma \leq d_2 \\ \beta, \gamma \neq \alpha}} \sum_{\substack{e \in F_{E_a} \wedge P_\alpha \setminus F_{E_a} \\ f = \{ P_\beta, P_\gamma \}}} (-1)^{n(f)} \left( \left( \left( E_a P_\alpha \right)^e \bl \left( P_\beta P_\gamma \right)^f \right) \cup e \cdot f + \left( \left( E_a P_\alpha \right)^e \bl \left( P_\beta P_\gamma \right)^f \right) \cup f \cdot e \right) \\
                                                  & = 0
    \end{align*}
    
    \begin{align*}
        (\ref{eq:III-IV-II}) + (\ref{eq:IV-III-II}) & = \sum_{\substack{1 \leq a \leq d_1 \\ 1 \leq \alpha, \beta \leq d_2}} \sum_{\substack{e \in F_{E_a} \wedge P_\alpha \setminus F_{E_a} \\ f = \{ P_\alpha, P_\beta \}}} (-1)^{n(f)} \left( \left( \left( E_a P_\alpha \right)^e \left( P_\alpha P_\beta \right)^f \bl \right) \cup e \cdot f + \left( \left( E_a P_\alpha \right)^e \left( P_\alpha P_\beta \right)^f \bl \right) \cup f \cdot e \right) \\
                                                    & = 0
    \end{align*}
    
    \begin{align*}
        (\ref{eq:IV-IV-I}) & = \sum_{\substack{1 \leq \alpha < \beta < \gamma < \delta \leq d_2}} (-1)^{n(f)} \left( \left( \left( P_\alpha P_\beta \right)^e
            \bl \left( P_\gamma P_\delta \right)^f  \right) \cup e \cdot f + \left( \left( P_\alpha P_\beta \right)^e
        \bl \left( P_\gamma P_\delta \right)^f  \right) \cup f \cdot e  \right)                                                                                                                      \\
                           & + \sum_{\substack{1 \leq \alpha < \gamma < \beta < \delta \leq d_2}} (-1)^{n(f)} \left( \left( \left( P_\alpha P_\beta \right)^e
            \bl \left( P_\gamma P_\delta \right)^f  \right)  \cup e \cdot f + \left( \left( P_\alpha P_\beta \right)^e
        \bl \left( P_\gamma P_\delta \right)^f  \right)  \cup f \cdot e \right)                                                                                                                      \\
                           & + \sum_{\substack{1 \leq \gamma < \alpha < \beta < \delta \leq d_2}} (-1)^{n(f)} \left( \left( \left( P_\gamma P_\delta \right)^f \bl \left( P_\alpha P_\beta \right)^e
            \right)  \cup e \cdot f + \left( \left( P_\gamma P_\delta \right)^f \bl \left( P_\alpha P_\beta \right)^e
        \right)  \cup f \cdot e \right)                                                                                                                                                              \\
                           & = 0
    \end{align*}
    \begin{align*}
        (\ref{eq:IV-IV-II}) + (\ref{eq:IV-IV-III}) & = \sum_{1 \leq \alpha < \beta < \gamma \leq d_2} (-1)^{n(f)} \left( \left( \left( P_\alpha P_\beta \right)^e \left( P_\alpha P_\gamma \right)^f \bl \right) \cup e \cdot f + \left( \left( P_\alpha P_\beta \right)^e \left( P_\alpha P_\gamma \right)^f \bl \right) \cup f \cdot e \right) \\
                                                   & + \sum_{1 \leq \alpha < \beta < \gamma \leq d_2} (-1)^{n(f)} \left( \left( \left( P_\alpha P_\beta \right)^e \left( P_\beta P_\gamma \right)^f \bl \right) \cup e \cdot f + \left( \left( P_\alpha P_\beta \right)^e \left( P_\beta P_\gamma \right)^f \bl \right) \cup f \cdot e \right)   \\
                                                   & + \sum_{1 \leq \alpha < \gamma < \beta \leq d_2} (-1)^{n(f)} \left( \left( \left( P_\alpha P_\gamma \right)^e \left( P_\beta P_\gamma \right)^f \bl \right) \cup e \cdot f + \left( \left( P_\alpha P_\gamma \right)^e \left( P_\beta P_\gamma \right)^f \bl \right) \cup f \cdot e \right) \\
                                                   & = 0
    \end{align*}
    Now, let us show the reverse inclusion in a constructive way. It is obvious for the cases (I) and (II), so we prove only the case (III).
    
    When $n - 2i \geq 0$, the element
    \begin{align*}
        z = \overset{x}{E_1} \bl \dots \bl \overset{x}{E_i} \bl \overset{x}{P_1} \bl \dots \bl \overset{x}{P_{n-2i}} \in C^{i, n-i}(G),
    \end{align*}
    where $\# E_h = 1$, $h = 1, \dots, i$ is itself an element of $\ker \partial^{i, n-i}$ and also an element of
    \begin{equation*}
        \left< \overset{x}{E_1} \bl \dots \bl \overset{x}{E_i} \bl \overset{x}{P_1} \bl \dots \bl \overset{x}{P_{n-2i}} ; \# E_h = 1, h = 1, \dots, i, n - 2i \geq 0 \right>.
    \end{equation*}
    Next, let us take
    \begin{align*}
        z = E_1 \bl \dots \bl \overset{x}{E_{i_t}} \bl \dots \bl E_{d_1} \bl P_1 \bl \dots \bl \overset{x}{P_{k_t'}} \bl \dots \bl P_{d_2} \in C^{i-1, n-i} (G),
    \end{align*}
    where $\# E_h = 1$, $h = 1, \dots, i$, $\begin{cases}
            1 \leq i_1< \dots < i_{t} < \dots < i_p \leq d_1,   \\
            1 \leq k_1< \dots < k_{t'}< \dots < k_{q} \leq d_2, \\
            p + q = j,
        \end{cases}
        \begin{cases}
            d_2 = n - \sum_{h=1}^{d_1} \# V(E_h), \\
            \sum_{h=1}^{d_1} \# E_h = i-1.
        \end{cases}$
    
    Remark that each $E_h$, $h = 1, \dots, d_1$ is not necessarily satisfy $\# E_h = 1$, and even if $\# E_h \geq 3$, it is not necessarily a tree but contains a cycles. Thus, all components are not necessarily colored $x$. Remark also that for fixed $z \in C^{i-1, n-i}(G)$, by adding an edge $e \in E(G) \setminus \bigcup_{h=1}^{d_1} E_h$ we obtain an element of $C^{i, n-i}(G)$, i.e., $z \cup e \in C^{i, n-i}(G)$.
    
    For the given $z \in C^{i-1, n-i}(G)$, consider $z_e = z \cup e$, $e \in E(G) \setminus \bigcup_{h} E_h$, and $Z = \sum_{e \in E(G) \setminus \bigcup_{h} E_h} z_e$. By the former computations we have $\partial^{i, n-i} \left( Z \right) = 0$. The elements $z_e$, $e \in E(G) \setminus \bigcup_h E_h$ hold the property that they share $(i-1)$ edges and the components colored $x$. Notice that the set $E(G) \setminus \bigcup_{h} E_h$ is the set of all edges that can be added to $z \in C^{i-1 ,n-i}(G)$.
    
    For the set $B = \{ e \in E(G); \text{ $e$ connects two components both colored $x$} \}$ the element $Z \in \ker \partial_G^{i, n-i}$ is the sum of $l = \# \left( E(G) \setminus \bigcup_{h=1}^{d_1} E_h \right) - \# B$ enhanced states. We show the $l$ is the minimum number, i.e., for any $S \subsetneq E(G) \setminus \bigcup_h E$ it is impossible to take $Z = \sum_{e \in S} z_e \in C^{i, n-i}(G)$ such that $Z \in \ker \partial_G^{i,n-i}$, where $z_e = z \cup e$, $z \in C^{i-1, n-i}(G)$.
    
    Let assume the minimum number is less than $l$, say $m (< l)$. Then, there exist enhanced states $z_{i_1}, \dots, z_{i_{m}} \in \{ z_1, \dots, z_l \}$ such that 
    \begin{align*}
        \partial_G^{i, n-i}(z_{i_1} + \dots + z_{i_{m}}) = 0.
    \end{align*}
    On the other hand, non-vanishing terms exist in the $\sum_{p=1}^{l-m} \partial_G^{i,n-i}(z_{i_p})$. Actually, for the set $X = \{ e \in E(G); e \in z_{i_p}, p = 1, \dots, m \}$ the boundary 
    \begin{align*}
        \partial_G^{i,n-i} \left( \sum_{1 \leq a \leq d_1} \sum_{e \in (F_{E_a} \setminus E_a) \cap X} \overset{(x)}{E_a^e} \cup e \right) = \sum_{1 \leq a \leq d_1} \sum_{e \in (F_{E_a} \setminus E_a) \cap X} \partial_G^{i, n-i} \left( \overset{(x)}{E_a^e} \cup e \right)
    \end{align*}
    has the term
    \begin{align*}
        \sum_{1 \leq a<b \leq d_1} \left( \sum_{\substack{e \in (F_{E_a} \setminus E_a) \cap X \\ f \in F_{E_b} \setminus E_b}} (-1)^{n(f)} \left( E_a^e \bl E_b^f \right) \cup e \cdot f + \sum_{\substack{e \in F_{E_a} \setminus E_a \\ f \in (F_{E_b} \setminus E_b) \cap X}} (-1)^{n(f)} \left( E_a^e \bl E_b^f \right) \cup f \cdot e \right) \neq 0
    \end{align*}
    for instance. Thus, $l$ is the minimum number.
    
    Next, we show that when we write $z = z_1 + \dots + z_l \in \ker \partial_G^{i, n-i}$, where $l$ is minimum as possible, $z_i$'s satisfy the property that they share $(i-1) $ edges and the components colored $x$. To show the fact, let us take 
    \begin{align*}
        z = E_1 \bl \dots \bl \overset{x}{E_{i_t}} \bl \dots \bl E_{d_1} \bl P_1 \bl \dots \bl \overset{x}{P_{k_t'}} \bl \dots \bl P_{d_2} \in C^{i-2, n-i} (G),
    \end{align*}
    where $E_h \subset E(G)$, $h = 1, \dots, d_1$, $\begin{cases}
            1 \leq i_1< \dots < i_{t} < \dots < i_p \leq d_1,   \\
            1 \leq k_1< \dots < k_{t'}< \dots < k_{q} \leq d_2, \\
            p + q = j,
        \end{cases}
        \begin{cases}
            d_2 = n - \sum_{h=1}^{d_1} \# V(E_h), \\
            \sum_{h=1}^{d_1} \# E_h = i-2.
        \end{cases}$ \\
    For $f_1, f_2, f_3, f_4 \in E(G) \setminus \bigcup_{h} E_h$ with $f_1, f_2 \neq f_3, f_4$ consider
    \begin{align*}
         & z_{f_1 f_2} = z \cup f_1 \cdot f_2 \in C^{i,n-i}(G)  \\
         & z_{f_3 f_4} = z \cup f_3 \cdot f_4 \in C^{i,n-i}(G).
    \end{align*}
    
    Then, any term of $\partial_G^{i, n-i}(z_{f_1 f_2})$ is of the form 
    \begin{equation*}
        (-1)^{n(g)} \left( E_1 \bl \dots \bl \overset{x}{E_{i_{t}}} \bl \dots \bl E_{d_1} \bl P_1 \bl \dots \bl \overset{x}{P_{k_{t'}}} \bl \dots \bl P_{d_2} \right) \cup f_1 \cdot f_2 \cdot g, \quad g \neq f_1, f_2,
    \end{equation*}
    while $\partial_G^{i, n-i}(z_{f_3 f_4})$ has terms of the form 
    \begin{equation*}
        (-1)^{n(g)} \left( E_1 \bl \dots \bl \overset{x}{E_{i_{t}}} \bl \dots \bl E_{d_1} \bl P_1 \bl \dots \bl \overset{x}{P_{k_{t'}}} \bl \dots \bl P_{d_2} \right) \cup f_3 \cdot f_4 \cdot g, \quad g \neq f_3, f_4.
    \end{equation*}
    Obviously these two terms cannot be canceled. This implies that we need to add more enhanced states of the form $z_{f_1 w}$ and $z_{f_3 w'}$ to construct an element of $\ker \partial_G^{i, n-i}$. By this construction, we finally get the sum
    \begin{align*}
        Z = \sum_{w} z_{f_1 w} + \sum_{w'} z_{f_3 w'}
    \end{align*}
    whose number of the enhanced states is more than $l$, which contradicts to the assumption of minimality of $l$. 
    
    By the construction it is obvious that for $z_0 \in C^{i,n-i}$ we cannot take a finite number of enhanced states $z_1, \dots, z_l \in C^{i, n - i}(G)$ such that $z_i$, $i = 0, \dots, l$ share $i - 1$ edges but at least one of the components colored $x$.
    
    Let us next show that for any $z_0 \in \ker \partial_G^{i,n-i}$ we cannot take a finite number of enhanced states $z_h$, $h=1, \dots, l$ such that $Z = z_0 + \dots + z_l \in \ker \partial_G^{i, n-i}$, where $z_h$, $h=1, \dots, l$ share $i$ edges but at least one of the components colored $x$. We show this fact in a constructive way. 
    
    Let us take $z_0 \in C^{i,n-i}(G)$ of the form
    \begin{equation*}
        z_0 = \dots \bl \overset{x}{K_1} \bl \dots \bl \overset{}{K_2} \bl \dots \bl \overset{x}{K_3} \bl \dots \in C^{i, n-i},
    \end{equation*}
    where $K = E$ or $P$.
    
    To cancel the term $\overset{x}{\left( K_1 K_2 \right)^e}$ of $\partial_G^{i, n-i}(z_0)$ we need to add $z_1 \in C^{i,n-i}(G)$ of the form
    \begin{align*}
         & (A): \dots \bl \overset{}{K_1} \bl \dots \bl \overset{x}{K_2} \bl \dots \bl \overset{(x)}{K_3} \bl \dots \text{ or} \\
         & (B): \dots \bl \overset{x}{K_1} \bl \dots \bl \overset{}{K_2} \bl \dots \bl \overset{}{K_3} \bl \dots.
    \end{align*}
    If we take $z_1$ of the form (A), then $\left( \overset{(x)}{K_1 K_3} \right)^e$ appears in $\partial_G^{i,n-i}(z_1)$ while it is 0 in $\partial_G^{i,n-i}(z_0)$. 
    Thus, this choice is 
    inappropriate to construct an element of $\ker \partial_G^{i, n-i}$. On the other hand, if we take $z_1$ of the form (B), then $\left( \overset{x}{K_1 K_3} \right)^e$ appears in $\partial_G^{i, n-i}(z_1)$ while it is 0 in $\partial_G^{i, n-i}(z_0)$, and so this choice is also inappropriate to construct an element of $\ker \partial_G^{i, n-i}$. 
    
    Therefore, enhanced states $z_h$, $h=0, \dots, l$ of the sum $z = z_0 + \dots + z_l \in \ker \partial_G^{i,n-i}$ should hold the property that they have $i - 1$ edges and the position of the color $x$ commonly. Thus, 
    \begin{align*}
         & \sum_{e \in E(G) \setminus \bigcup_{h=1}^{d_1} E_h} (-1)^{n(e)} \left( E_1 \bl \dots \bl \overset{x}{E_{i_{t}}} \bl \dots \bl E_{d_1} \bl P_1 \bl \dots \bl \overset{x}{P_{k_{t'}}} \bl \dots \bl P_{d_2} \right) \cup e ; \\
         & E_h \subset E(G), h = 1, \dots, d_1, \begin{cases}
            1 \leq i_1< \dots < i_{t} < \dots < i_p \leq d_1,   \\
            1 \leq k_1< \dots < k_{t'}< \dots < k_{q} \leq d_2, \\
            p + q = j,
        \end{cases}
        \begin{cases}
            d_2 = n - \sum_{h=1}^{d_1} \# V(E_h), \\
            \sum_{h=1}^{d_1} \# E_h = i-1
        \end{cases}
    \end{align*}
    becomes the generator of $\ker \partial_G^{i, n-i}$, and there does not exist any other type of generators.
\end{proof}

By the similar discussion as Proposition \ref{prop:generator} we can give generators of $\ker \partial_G^{i,j}$ when $i + j = n - 1$ as following.
\begin{prop}\label{prop:generator2}
    Let $G = K_{n-1}^m$ be the graph defined in the previous section. The generators of $\ker (\partial_G^{i, n - i - 1} : C^{i, n - i - 1}(G) \to C^{i + 1, n - i - 1}(G))$ are given as follows.
    
    (I) $\ker \partial_G^{0, n - 1} = \{ 0 \}$. \\
    
    (II) For $i = 1$, \begin{align*}
         & \ker \partial_G^{1, n - 2} = \left< \sum_{e \in E(G) \setminus E} \left( \overset{x}{E} \bl \overset{x}{P_1} \bl \dots \bl \overset{\hat{x}}{P_i} \bl \dots \bl \overset{x}{P_{n-2}} \right) \cup e ; E \subset E(G), \# E = 1 \right> \\
         & \cup \left< \sum_{e \in E(G) \setminus E} \left( \overset{}{E} \bl \overset{x}{P_1} \bl \dots \bl \overset{x}{P_{n-2}} \right) \cup e ; E \subset E(G), \# E = 1 \right> 
    \end{align*}
    
    (III) For $i = 2$, if $n = 4$,
    \begin{align*}
        \ker \partial_G^{2, 1} & = \left< \overset{x}{E_1} \bl E_2 - E_1 \bl \overset{x}{E_2} ; E_1, E_2 \subset E(G), \# E_1 = \# E_2 = 1 \right> \cup \left< \sum_{e \in E(G) \setminus E} \left( \overset{x}{E} \bl \overset{}{P_1} \bl P_2 \right) \cup e ; E \subset E(G), \# E = 2 \right> \\
                               & \cup \left< \sum_{e \in E(G) \setminus E} \left( \overset{}{E} \bl \overset{x}{P_1} \bl \overset{}{P_2} \right) \cup e ; E \subset E(G), \# E = 2 \right>,
    \end{align*}
    and if $n > 4$,
    \begin{align*}
        \ker \partial_G^{2, n - 3} & = \left< \sum_{e \in E(G) \setminus E} \left( \overset{x}{E} \bl \overset{x}{P_1} \bl \dots \bl \overset{\hat{x}}{P_i} \bl \dots \bl \overset{x}{P_{n-3}} \right) \cup e ; E \subset E(G), \# E = 2 \right> \\
                                   & \cup \left< \sum_{e \in E(G) \setminus E} \left( \overset{}{E} \bl \overset{x}{P_1} \bl \dots \bl \overset{x}{P_i} \bl \dots \bl \overset{x}{P_{n-3}} \right) \cup e ; E \subset E(G), \# E = 2\right>.
    \end{align*} \\
    
    (IV) For $i \geq 3$,
    \begin{align*}
        \ker \partial_G^{i, n - i - 1} & = \left< \sum_{e \in E(G) \setminus \bigcup_{h=1}^{d_1} E_h} (-1)^{n(e)} \left( E_1 \bl \dots \bl \overset{x}{E_{i_{t}}} \bl \dots \bl E_{d_1} \bl P_1 \bl \dots \bl \overset{x}{P_{k_{t'}}} \bl \dots \bl P_{d_2} \right) \cup e ; \right. \\
                                       & E_h \subset E(G), h=1, \dots, d_1, \begin{cases}
            1 \leq i_1< \dots < i_{t} < \dots < i_p \leq d_1,   \\
            1 \leq k_1< \dots < k_{t'}< \dots < k_{q} \leq d_2, \\
            p + q = j,
        \end{cases}
        \left. \begin{cases}
            d_2 = n - \sum_{h=1}^{d_1} \# V(E_h), \\
            \sum_{h=1}^{d_1} \# E_h = i-1
        \end{cases}
        \right>
    \end{align*}
    
\end{prop}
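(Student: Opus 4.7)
The plan is to follow the same two-step strategy used in the proof of Proposition~\ref{prop:generator}: first verify by direct boundary computation that each listed element lies in $\ker \partial_G^{i,n-i-1}$, and then establish the reverse inclusion via a minimality argument.

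Case (I) is immediate: every state in $C^{0,n-1}(G)$ has exactly one uncolored vertex, and connectedness of $G$ guarantees that adding any edge incident to that vertex produces a distinct nonzero summand $\overset{x}{(P_a P_i)^e}$, so $\partial_G^{0,n-1}$ is injective. For cases (II), (IV), and the summation-over-edges parts of case (III), the boundary cancellations parallel the long list of cancellation identities appearing in the proof of Proposition~\ref{prop:generator} essentially verbatim: the mechanism of adding a second edge $f$ to $S \cup e$ via (\ref{eq:anti_commutativity}) produces exactly the same pairs of mutually cancelling terms, and having $j = n - i - 1$ instead of $j = n - i$ only removes certain matched terms uniformly from both sides of each paired cancellation, so the paired sums still vanish.

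For the reverse inclusion I would repeat the minimality argument of Proposition~\ref{prop:generator}: given $z \in \ker \partial_G^{i,n-i-1}$, write $z$ as a sum of enhanced states with as few summands as possible, and show that the summands must share the same underlying $(i-1)$-edge subset together with the same $x$-coloring pattern on the non-added components. The two structural claims used there---that one cannot trade the position of an $x$-color between summands without producing an uncancelable boundary term of the form $\overset{x}{(K_1 K_3)^e}$, and that one cannot truncate the sum over added edges without leaving orphan mixed-type terms of the form $(\cdots) \cup e \cdot f$---transfer unchanged, since both depend only on the edge-adding combinatorics rather than on the total $j$-grading. The only effect of the shifted grading is bookkeeping: one of the component-slots is left uncolored, giving rise to the two distinct families in each of (II), (III), (IV) according to whether the uncolored slot is an edge-component $E$ or an isolated point $P_i$.

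The principal obstacle will be the anomalous subcase $n = 4$, $i = 2$, where the additional generator $\overset{x}{E_1} \bl E_2 - E_1 \bl \overset{x}{E_2}$ appears. Here $G = K_3^m$ has only four vertices, so two disjoint one-edge components $E_1, E_2$ already exhaust $V(G)$ and force $d_2 = 0$; any edge $e$ between them merges the two into a single component $\overset{x}{(E_1 E_2)^e}$ irrespective of which of $E_1, E_2$ originally carried the $x$-color, so these two states appear together with opposite signs. One must then verify that no analogous antisymmetric two-term cycle survives for $n \geq 5$ or for $i \geq 3$, which reduces to checking that the vertex accounting $\sum_h \#V(E_h) + d_2 = n$ combined with $\sum_h \#E_h = i - 1$ always leaves at least one isolated point available, so that the standard summation-over-edges generator necessarily involves strictly more than two terms and no new degenerate cycle can arise.
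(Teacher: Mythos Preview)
Your proposal is correct and follows essentially the same approach as the paper: both reduce cases (I), (II), (IV), and the $n>4$ part of (III) to the cancellation and minimality arguments of Proposition~\ref{prop:generator}, and both single out the $n=4$, $i=2$ subcase for the special antisymmetric generator $\overset{x}{E_1}\,|\,E_2 - E_1\,|\,\overset{x}{E_2}$. Your explanation of why this subcase is exceptional (two disjoint one-edge components exhaust the four vertices, forcing $d_2=0$ so that both colorings have identical boundary) is in fact slightly more explicit than the paper's, which simply exhibits the computation.
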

\begin{proof}
    The proof of the case (I) is obvious, and the cases (II), (III) when $n > 4$, (IV) are the similar as the proof of the case (III) in Proposition \ref{prop:generator}, so we only give the proof for the case (III) when $n = 4$. By just simple computations, we have 
    \begin{align*}
         & \partial_G^{2, 1} \left( \overset{x}{E_1} \bl E_2 - E_1 \bl \overset{x}{E_2} \right) = 0                                                          \\
         & \partial_G^{2, 1} \left( \sum_{e \in E(G) \setminus E} \left( \overset{x}{E} \bl \overset{}{P_1} \bl P_2 \right) \cup e \right) = 0               \\
         & \partial_G^{2, 1} \left( \sum_{e \in E(G) \setminus E} \left( \overset{}{E} \bl \overset{x}{P_1} \bl \overset{}{P_2} \right) \cup e \right) = 0. 
    \end{align*}
    Let us consider the reverse inclusion. There are three types of enhanced states of $C^{2, 1}(G)$: (A) $\overset{x}{E_1} \bl E_2$, where $E_1, E_2 \subset E(G)$, $\# E_1 = \# E_2 = 1$; (B) $\left( \overset{x}{E} \bl \overset{}{P_1} \bl P_2 \right) \cup e$, where $E \subset E(G)$, $\# E = 2$; (C) $\left( \overset{}{E} \bl \overset{x}{P_1} \bl \overset{}{P_2} \right) \cup e$, where $E \subset E(G)$, $\# E = 2$. 
    
    Consider the case (A). The boundary would be of the form
    \begin{align*}
        \partial_G^{2, 1}(\overset{x}{E_1} \bl E_2) = \sum_{e \in E(G) \setminus E_1 \cup E_2} (-1)^{n(e)} \left( \overset{x}{E_1 E_2} \right)^e \cup e.
    \end{align*}
    On the other hand, the boundary of $\overset{}{E_1} \bl \overset{x}{E_2} \in C^{2,1}(G)$ is also the same one, so to construct the element of $\ker \partial_G^{2,1}$, it suffices to consider
    \begin{align*}
        \overset{x}{E_1} \bl E_2 - E_1 \bl \overset{x}{E_2}
    \end{align*}
    which becomes a generator of $\ker \partial_G^{i, n-i-1}$. For the cases (B) and (C) follow from the similar proof the case (A).
\end{proof}
\section{A splitting property of the chromatic homology for the complete graph}\label{sec:complete}
Using the description of cocycles given in the previous section, we can prove the following proposition.
\begin{prop}\label{prop:snake}
    Let $2 \leq m \leq n-1$, $G = K_{n-1}^m$ be the graph defined above and $e = \{ m, n \} \in E(K_{n-1}^m)$. Then, for any $i,j$ such that $i + j = n - 1, n$, the connecting homomorphism $\gamma^{i,j}$ of the following diagram is a 0-map: \\
    \begin{center}
        \begin{tikzpicture}[]
            \matrix[matrix of math nodes,column sep={60pt,between origins},row
            sep={60pt,between origins},nodes={asymmetrical rectangle}] (s)
            {
            &|[name=05]| 0 & |[name=06]| 0 & |[name=07]| 0 \\
            |[name=00-1]| 0 &|[name=ka]| \ker \partial_{G/e}^{i-1,j} &|[name=kb]| \ker \partial_G^{i,j} &|[name=kc]| \ker \partial_{G-e}^{i,j} \\
            |[name=03]| 0&|[name=A]| C^{i-1,j}(G/e) &|[name=B]| C^{i,j}(G) &|[name=C]| C^{i,j}(G-e) &|[name=01]| 0 \\
            |[name=02]| 0 &|[name=A']| C^{i,j}(G/e) &|[name=B']| C^{i+1,j}(G) &|[name=C']| C^{i+1,j}(G-e) & |[name=04]| 0 \\
            &|[name=ca]| \coker \partial_{G/e}^{i-1,j} &|[name=cb]| \coker \partial_G^{i,j} &|[name=cc]| \coker \partial_{G-e}^{i,j} & |[name=00-2]| 0 \\
            &|[name=08]| 0 & |[name=09]| 0 & |[name=10]| 0 \\
            };
            \draw[->]
            (00-1) edge (ka)
            (ka) edge (A)
            (kb) edge (B)
            (kc) edge (C)
            (05) edge (ka)
            (06) edge (kb)
            (07) edge (kc)
            (A) edge node[auto] {$\alpha^{i-1,j}$} (B)
            (B) edge node[auto] {$\beta^{i,j}$} (C)
            (C) edge (01)
            (A) edge node[auto] {\(\partial_{G/e}^{i-1,j}\)} (A')
            (B) edge node[auto] {\(\partial_G^{i,j}\)} (B')
            (C) edge node[auto] {\(\partial_{G-e}^{i,j}\)} (C')
            (02) edge (A')
            (03) edge (A)
            (A') edge node[auto] {$\alpha^{i,j}$} (B')
            (B') edge node[auto] {$\beta^{i+1,j}$} (C')
            (A') edge (ca)
            (B') edge (cb)
            (C') edge (cc)
            (ca) edge (08)
            (cb) edge (09)
            (cc) edge (10)
            (C') edge (04)
            (cc) edge (00-2)
            ;
            \draw[->,gray] (ka) edge (kb)
            (kb) edge (kc)
            (ca) edge (cb)
            (cb) edge (cc)
            ;
            \draw[->,gray,rounded corners] (kc) -| node[auto,text=black,pos=.7]
                {\(\gamma^{i,j}\)} ($(01.east)+(.5,0)$) |- ($(B)!.35!(B')$) -|
            ($(02.west)+(-.5,0)$) |- (ca);
        \end{tikzpicture}
    \end{center}
\end{prop}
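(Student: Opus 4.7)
The plan is to unwind the snake-lemma definition of $\gamma^{i,j}$. Given $z \in \ker \partial_{G-e}^{i,j}$, choose any lift $\tilde z \in C^{i,j}(G)$ with $\beta^{i,j}(\tilde z) = z$; then $\partial_G^{i,j}(\tilde z) \in \ima \alpha^{i,j}$, so writing $\partial_G^{i,j}(\tilde z) = \alpha^{i,j}(w)$, one has $\gamma^{i,j}(z) = [w]$ in $\coker \partial_{G/e}^{i-1,j}$. Since $\alpha$ is a chain map, $[w]$ vanishes precisely when $\tilde z$ can be modified by an element of $\ima \alpha^{i,j}$ into a cocycle of $G$. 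Hence it suffices to show that every generator of $\ker \partial_{G-e}^{i,j}$ admits a cocycle lift through $\beta^{i,j}$ into $\ker \partial_G^{i,j}$.

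Because $m \geq 2$, the graph $G - e$ is itself of the form $K_{n-1}^{m-1}$, so Propositions \ref{prop:generator} and \ref{prop:generator2} apply to list generators of $\ker \partial_{G-e}^{i,j}$ for $i + j \in \{n-1, n\}$. These split into two families. Elements of the \emph{singleton family}---the states with all components colored $x$ and each $E_h$ a single edge (the first lists in Proposition \ref{prop:generator}(I)(II)(III) and the corresponding entries of Proposition \ref{prop:generator2}), together with the difference $\overset{x}{E_1} \bl E_2 - E_1 \bl \overset{x}{E_2}$ arising in the special $n = 4$ subcase of Proposition \ref{prop:generator2}(III)---already live in $C^{i,j}(G)$, and the very reasoning that makes them cocycles in $G-e$ (each admissible edge-addition either merges two components colored $x$ and so is zero, or is cancelled pairwise across a difference) holds verbatim in $G$. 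Hence the trivial lift $\tilde z := z$ is already a $G$-cocycle. Elements of the \emph{sum family} have the shape $\tilde z_0 = \sum_{e' \in E(G-e) \setminus \bigcup_h E_h} ( \cdots ) \cup e'$ with each $E_h \subseteq E(G-e)$, so $e \notin \bigcup_h E_h$. I define the lift
\begin{equation*}
    \tilde z \;:=\; \sum_{e' \in E(G) \setminus \bigcup_h E_h} ( \cdots ) \cup e' \;=\; \tilde z_0 \;+\; ( \cdots ) \cup e.
\end{equation*}
Because $\beta^{i,j}$ annihilates every state whose edge set contains $e$, we have $\beta^{i,j}(\tilde z) = \tilde z_0 = z$. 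On the other hand, $\tilde z$ now has exactly the form of a generator of $\ker \partial_G^{i,j}$ produced by Proposition \ref{prop:generator} or \ref{prop:generator2} applied to $G$ itself, so $\partial_G^{i,j}(\tilde z) = 0$.

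The main obstacle is not the idea but the bookkeeping: for each of the subcases (I)--(III) of Proposition \ref{prop:generator} and (I)--(IV) of Proposition \ref{prop:generator2}---in particular the low-degree and the special $n = 4$ entries---one must verify that the ``extend the sum by the $e' = e$ term'' recipe genuinely produces an element of the explicit generating set of $\ker \partial_G^{i,j}$ listed there. Once this uniform case-check is in place, $\gamma^{i,j}$ vanishes on a generating set of $\ker \partial_{G-e}^{i,j}$ for every $(i, j)$ with $i + j \in \{n-1, n\}$, so $\gamma^{i,j} = 0$.
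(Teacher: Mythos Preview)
Your proposal is correct and follows essentially the same route as the paper's proof: both arguments use the explicit generators of $\ker\partial_{G-e}^{i,j}$ from Propositions~\ref{prop:generator} and~\ref{prop:generator2}, observe that the all-$x$ singleton states are already $G$-cocycles, and lift each sum-type generator by adjoining the single missing term $(\cdots)\cup e$ so that the result matches a listed generator of $\ker\partial_G^{i,j}$. The paper phrases the lift via a four-case split on how $e$ meets the components of the underlying state, whereas you recognize directly that the completed sum has the required form; these are equivalent, and your treatment of the $n=4$ difference generator is a detail the paper leaves implicit under ``follows similarly.''
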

\begin{proof}
    We only give a proof for $i,j$ with $i + j = n$, since the cases $i,j$ with $i + j = n - 1$ follow similarly.
    Let us consider for $i,j$ with $i + j = n$. We can easily check that the elements 
    \begin{align*}
         & \overset{x}{P_1} \bl \dots \bl \overset{x}{P_n} \in \ker \partial_{G-e}^{0, n}                                                                                                                                                        \\
         & \overset{x}{E} \bl \overset{x}{P_1} \bl \dots \bl \overset{x}{P_{n-2}} \in \ker \partial_{G-e}^{1,n-1}, \text{ where $E \subset E(G - e)$, $\# E = 1$}                                                                                \\
         & \overset{x}{E_1} \bl \dots \bl \overset{x}{E_{i}} \bl \overset{x}{P_1} \bl \dots \bl \overset{x}{P_{n - 2i}} \in \ker \partial_{G-e}^{i, n-i}, \text{ where $E_h \subset E(G - e)$, $\# E_h = 1$, $h = 1, \dots, i$, $n - 2i \geq 0$}
    \end{align*}
    are mapped to 0 by $\gamma^{i, n-i}$ respectively. This is because of the following reason. Since there are only edges connecting two components colored $x$ when we take their boundaries $\partial_G^{i, n-i}$, these elements can also be regarded as elements in $\ker \partial_G^{i,n-i}(G)$. Thus, by diagram chasing these elements are mapped to 0 by $\gamma^{i, n-i}$.
    
    For $i \geq 2$ take 
    \begin{align*}
        z = \sum_{f \in E(G - e) \setminus \bigcup_{h=1}^{d_1} E_h} \left( E_1 \bl \dots \bl \overset{x}{E_{i_{t}}} \bl \dots \bl E_{d_1} \bl P_1 \bl \dots \bl \overset{x}{P_{k_{t'}}} \bl \dots \bl P_{d_2} \right) \cup f \in \ker \partial_{G-e}^{i,n-i},
    \end{align*}
    where $E_h \subset E(G)$, $h = 1, \dots, d_1$, $\begin{cases}
            1 \leq i_1< \dots < i_{t} < \dots < i_p \leq d_1,   \\
            1 \leq k_1< \dots < k_{t'}< \dots < k_{q} \leq d_2, \\
            p + q = j,
        \end{cases}
        \begin{cases}
            d_2 = n - \sum_{h=1}^{d_1} \# V(E_h), \\
            \sum_{h=1}^{d_1} \# E_h = i-1.
        \end{cases}$
    
    There are four cases for the deleted edge $e$ to be added to $z \in C^{i,n-i}(G)$ : (i) $\overset{(x)}{E_a} \sim_e \overset{(x)}{E_a}$; (ii) $\overset{(x)}{E_a} \sim_e \overset{(x)}{E_b}$; (iii) $\overset{(x)}{E_a} \sim_e \overset{(x)}{n}$; (iv) $\overset{(x)}{P_\alpha} \sim_e \overset{(x)}{n}$. Here, we only consider the case (i), since other cases follow by similarly.
    
    By taking
    \begin{align*}
        w = (-1)^{n(e)} \left( E_1 \bl \dots \bl \overset{(x)}{\left( E_{a} \right)^e} \bl \dots \bl E_{d_1} \bl P_1 \bl \dots \bl \overset{x}{P_{k_{t'}}} \bl \dots \bl P_{d_2} \right) \cup e \in C^{i,n-i}(G),
    \end{align*}
    we obtain $\beta^{i,n-i}(z + w) = z$. Moreover, we have $z + w \in \ker \partial_G^{i,n-i}$, since 
    \begin{align*}
        z + w = & \sum_{f \in E(G) \setminus \bigcup_{h=1}^{d_1} E_h \cup \{ e \}} (-1)^{n(f)} \left( E_1 \bl \dots \bl \overset{x}{E_{i_{t}}} \bl \dots \bl E_{d_1} \bl P_1 \bl \dots \bl \overset{x}{P_{k_{t'}}} \bl \dots \bl P_{d_2} \right) \cup f \\
                & + (-1)^{n(e)} \left( E_1 \bl \dots \bl \overset{(x)}{\left( E_{a} \right)^e} \bl \dots \bl E_{d_1} \bl P_1 \bl \dots \bl \overset{x}{P_{k_{t'}}} \bl \dots \bl P_{d_2} \right) \cup e                                                 \\  
                & = \sum_{f \in E(G) \setminus \bigcup_{h=1}^{d_1} E_h} (-1)^{n(f)} \left( E_1 \bl \dots \bl \overset{x}{E_{i_{t}}} \bl \dots \bl E_{d_1} \bl P_1 \bl \dots \bl \overset{x}{P_{k_{t'}}} \bl \dots \bl P_{d_2} \right) \cup f,
    \end{align*}
    and thus by Proposition \ref{prop:generator} we can see that $z + w \in \ker \partial_G^{i,n-i}$. Thus, by diagram chasing, the element $z \in \ker \partial_{G-e}^{i,n-i}$ is mapped to 0 by $\gamma^{i,n-i}$.
\end{proof}
By Proposition \ref{prop:snake} we have short exact sequences
\begin{equation}\label{eq:seq}
    0 \to H^{i,j}(G/e) \to H^{i+1,j}(G) \to H^{i+1,j}(G-e) \to 0
\end{equation}
for all $i,j$ with $i + j = n-1, n$. 

Notice that $H^{i,n-i-1}(G)$ are torsion free for any connected graph $G$ and for all $i$ by Lemma \ref{lem:quote}, the sequence (\ref{eq:seq}) always split for all $i, j$ with $i + j = n-1$.

Next, let us prove the sequence (\ref{eq:seq}) split for all $i, j$ with $i + j = n$. Remark that by Lemma \ref{lem:split_i2} the sequence split for all $i \geq 2$ and that we have $H^{0,n}(G) \simeq H^{0,n}(G \setminus e)$ in general. Thus, it suffices to consider the case $i = 1$.

Let us define a map $\varphi^{1,n-1}: C^{1,n-1}(G) \to C^{0,n-1}(G/e)$ by $S = (s,c) \mapsto (s/e, c_e)$, where $s/e$ is the subset of $E(G/e)$ obtained by contracting $e$ from $n$, and $c_e$ is properly defined in Subsection \ref{subsec:ch_hlg}. This induces a well-defined section
\begin{equation*}
    \widetilde{\varphi}^{1,n-1}: H^{1,n-1}(G) \to H^{0,n-1}(G/e).
\end{equation*}
Actually, it is obvious that $\varphi^{1,n-1} (\ker \partial_G^{1,n-1}) \subset \ker \partial_{G/e}^{0, n-1}$. The fact $\varphi^{1,n-1} (\ima \partial_G^{0,n-1}) = 0$ can be seen as follows.

Since the generators of $C^{0, n-1}(G)$ are of the form $\overset{x}{1} \bl \dots \bl \overset{\hat{x}}{i} \bl \dots \bl \overset{x}{n}$, $i = 1, \dots, n$, $\ima \partial_G^{0,n-1}$ consists of the elements of the form
\begin{align*}
    \sum_{\substack{e = \{ i, j \} \\ j \neq i}} \overset{x}{\left\{ i, j \right\}^e} \bl \overset{x}{P_1} \bl \dots \bl \overset{x}{P_{n-1}} \cup e,
\end{align*}
where for each $i = 1, \dots, n$.

Consider the generators of $\ima \partial_G^{0,n-1}$ of the form
\begin{align*}
     & w = \sum_{\substack{e = \{ m, j \}  \\ j \neq m}} \overset{x}{\left\{ m, j \right\}^e} \bl \overset{x}{P_1} \bl \dots \bl \overset{x}{P_{n-1}} \cup e, \text{ and} \\
     & w' = \sum_{\substack{e = \{ j, n \} \\ j \neq n}} \overset{x}{\left\{ j, n \right\}^e} \bl \overset{x}{P_1} \bl \dots \bl \overset{x}{P_{n-1}} \cup e.
\end{align*}
By replacing generators $w$ by $w - w'$, we can assume that two vertices $m, n$ of the generator are distinct components and both colored $x$. Thus, the map $\varphi^{1,n-1}$ sends all elements in $\ima \partial_G^{0, n-1}$ to $0 \in C^{0, n-1}(G/e)$.

Thus, we can see that the map $\varphi^{1,n-1}$ induces a well-defined section
\begin{equation*}
    \widetilde{\varphi}^{1,n-1}: H^{1,n-1}(G) \to H^{0,n-1}(G/e),
\end{equation*}
and thus the sequences also split when $(i, j) = (1, n - 1)$. Hence, we have the following main theorem of the present paper.

\begin{thm}\label{thm:main_split}
    Let $G = K_{n-1}^m$ be the graph defined above and $e = \{ m, n \}$. Then, we have the following split exact sequence
    \begin{equation}
        0 \to H^{i,j}(G/e) \to H^{i+1, j} (G) \to H^{i+1,j} (G - e) \to 0
    \end{equation}
    for all $i, j$ such that $i + j = n-1, n$. 
    
    If we sum over $j$, we have the split exact sequence
    \begin{equation}
        0 \to H^{i}(G/e) \to H^{i+1} (G) \to H^{i+1} (G - e) \to 0
    \end{equation}
    for all $i$.
\end{thm}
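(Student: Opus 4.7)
The plan is to combine Proposition \ref{prop:snake}, which provides exactness, with several complementary splitting arguments. By Proposition \ref{prop:snake} the connecting homomorphism $\gamma^{i,j}$ vanishes at every bigrading where its source and target are both possibly nonzero, so the long exact sequence of Theorem \ref{thm:exct_seq} breaks into the short exact sequences of the claimed form. For $i+j = n$ every term already vanishes by Lemma \ref{lem:quote}, so the splitting is trivial; the substantive content is the case $i+j = n - 1$.

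For the non-trivial case I would distinguish subcases by the value of the middle index $i+1$. When $i+1 \geq 2$, Lemma \ref{lem:split_i2} of Sazdanovic--Scofield applies directly and gives the splitting $H^{i+1,j}(G) \simeq H^{i,j}(G/e) \oplus H^{i+1,j}(G-e)$. When the left term $H^{i,j}(G/e)$ already vanishes by Lemma \ref{lem:quote} (for instance, $i+1=0$, $j=n$, so the sequence collapses to the isomorphism $H^{0,n}(G) \simeq H^{0,n}(G-e)$), splitting is automatic. This reduces the problem to the boundary case $(i, j) = (0, n-1)$, where neither method is available.

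For that remaining case I plan to construct an explicit chain-level section $\varphi^{1, n-1}: C^{1, n-1}(G) \to C^{0, n-1}(G/e)$, defined on generating enhanced states by contracting the unique edge of $s$ and merging colors via the rule of Subsection \ref{subsec:ch_hlg}. Since $C^{0, n-1}(G/e)$ is generated by the single all-$x$ state, the inclusion $\varphi^{1, n-1}(\ker \partial_G^{1, n-1}) \subset \ker \partial_{G/e}^{0, n-1}$ is immediate. The main obstacle, and the only substantive step, is verifying that $\varphi^{1, n-1}$ annihilates $\ima \partial_G^{0, n-1}$. Here one would use the explicit generators $\partial g_i$ for $i = 1, \ldots, n$, where $g_i$ is the state with vertex $i$ colored $1$ and every other vertex colored $x$. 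Only $\partial g_m$ and $\partial g_n$ contribute summands carrying the edge $e = \{m, n\}$, and in the combination $w - w' = \partial g_m - \partial g_n$ these matching summands cancel, exploiting the specific choice $e = \{m, n\}$ in the graph $K_{n-1}^m$. Once $\varphi^{1, n-1}$ thus descends to a section on homology, the sequence splits and the bigraded statement of the theorem follows; the total-degree version is then immediate by direct sum over $j$.
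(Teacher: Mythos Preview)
Your proposal follows the paper's proof essentially verbatim: Proposition \ref{prop:snake} for exactness, Lemma \ref{lem:split_i2} for the generic splitting, and the explicit retraction $\varphi^{1,n-1}$ together with the $w - w' = \partial g_m - \partial g_n$ cancellation for the remaining boundary case $(i,j) = (0,n-1)$. Your diagonal bookkeeping (the case $i+j=n$ is vacuous and the content lives on $i+j=n-1$) is in fact cleaner than the paper's own text, which drifts between the left index $(i,j)$ and the middle index $(i+1,j)$ when discussing the two diagonals.
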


Since $K_n/e = K_{n-1}$, we have the following proposition.
\begin{prop}\label{prop:main}
    For $1 \leq j \leq n - 2$, we have the following split exact sequence
    \begin{align*}
        0 \to H^{i,j}(K_{n-1}) \to H^{i+1,j}(K_{n-1}^m) \to H^{i+1,j}(K_{n-1}^{m+1}) \to 0
    \end{align*}
    for all $i,j$ such that $i + j = n-1, n$. \\
    By summing up by $j$ we have the split exact sequence
    \begin{align*}
        0 \to H^{i}(K_{n-1}) \to H^{i+1}(K_{n-1}^m) \to H^{i+1}(K_{n-1}^{m+1}) \to 0
    \end{align*}
    for all $i$.
\end{prop}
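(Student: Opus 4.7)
The proposition is an immediate specialization of Theorem \ref{thm:main_split}, obtained by choosing the right graph and edge. My plan is to take $G = K_{n-1}^{m+1}$ and let $e = \{m+1, n\}$ be the ``new'' connecting edge that distinguishes $K_{n-1}^{m+1}$ from $K_{n-1}^m$.

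The first task is to identify $G - e$ and $G/e$. That $G - e = K_{n-1}^m$ is immediate from the definition of $K_{n-1}^{m+1} = (V(K_{n-1}) \cup \{n\},\, E(K_{n-1}) \cup S_{m+1})$, since $S_{m+1} \setminus \{\{m+1, n\}\} = S_m$. For the contraction, merging the endpoints $m+1$ and $n$ turns each remaining connecting edge $\{t, n\}$ (for $1 \leq t \leq m$) into a parallel copy of the edge $\{t, m+1\}$ that already belongs to $K_{n-1}$. Invoking the convention that parallel edges are absorbed in chromatic homology---the same convention used in the identification $K_n/e = K_{n-1}$ cited immediately before the proposition---this gives $G/e = K_{n-1}$.

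With these two graph identifications in hand, Theorem \ref{thm:main_split} produces the split short exact sequence relating the three homology groups $H^{i,j}(K_{n-1})$, $H^{i+1,j}(K_{n-1}^{m+1})$, and $H^{i+1,j}(K_{n-1}^m)$ in the stated bidegree range $i + j \in \{n-1, n\}$; summing over $j$ then gives the total-degree version. The restriction $1 \leq j \leq n-2$ coincides with the nonvanishing range permitted by Lemma \ref{lem:quote} and Remark \ref{rem:focuspart}, so no bidegree is lost.

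I anticipate no real obstacle: the whole content is packaged inside Theorem \ref{thm:main_split} as soon as the deletion and contraction have been identified. The only conceptual subtlety is the parallel-edge absorption in computing $K_{n-1}^{m+1}/e$; once granted, the rest of the proof is pure substitution, and the splitting is inherited from the splitting established in Theorem \ref{thm:main_split}.
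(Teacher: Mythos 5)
Your proof is correct and takes essentially the same route as the paper, which offers no argument beyond the observation that contracting the connecting edge yields $K_{n-1}$ and then cites Theorem \ref{thm:main_split}. The only point worth flagging is that your application (with $G = K_{n-1}^{m+1}$, $e = \{m+1,n\}$) places $H^{i+1,j}(K_{n-1}^{m+1})$ in the middle and $H^{i+1,j}(K_{n-1}^{m})$ on the right, the opposite of the proposition as printed; your orientation is the correct one, consistent with how the proposition is actually used in the proof of Theorem \ref{thm:main}, so the printed statement appears to contain a transposition of $m$ and $m+1$.
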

Thus, as a corollary of Proposition \ref{prop:main} we obtain a recursive description of the chromatic homology of the complete graph.
\begin{thm}[Conjecture 6.8 \cite{HY}]\label{thm:main}
    For $n \geq 4$ the chromatic homology groups of a complete graph $K_n$ are given recursively as
    \begin{equation}
        H^i (K_n) =
        \begin{cases}
            \ZZ \{ n \}                                                  & i = 0             \\
            H^{i-1} (K_{n-1})^{\oplus (n-2)} \oplus H^i(K_{n-1}) \{ 1 \} & 1 \leq i \leq n-2 \\
            0                                                            & i \geq n-1.
        \end{cases}
    \end{equation}
\end{thm}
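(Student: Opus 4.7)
The plan is to iterate the split short exact sequence from Proposition~\ref{prop:main} (a consequence of Theorem~\ref{thm:main_split}) $n-2$ times in order to reduce $H^*(K_n)$ to $H^*(K_{n-1}^1)$, and then to compute $H^*(K_{n-1}^1)$ by hand from the long exact sequence of Theorem~\ref{thm:exct_seq} applied to the unique pendant edge $e=\{1,n\}$. The boundary cases $i=0$ and $i\ge n-1$ are handled separately by Propositions~\ref{prop:generator}(I), \ref{prop:generator2}(I), and Lemma~\ref{lem:quote}.

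First, since $K_n=K_{n-1}^{n-1}$, $K_{n-1}^m/e=K_{n-1}$, and $K_{n-1}^m-e=K_{n-1}^{m-1}$ for $e=\{m,n\}$, the split short exact sequence reads
\begin{equation*}
0\to H^i(K_{n-1})\to H^{i+1}(K_{n-1}^m)\to H^{i+1}(K_{n-1}^{m-1})\to 0
\end{equation*}
for $2\le m\le n-1$. Applying the resulting direct-sum decomposition successively for $m=n-1,n-2,\ldots,2$ yields
\begin{equation*}
H^i(K_n)\;\cong\; H^{i-1}(K_{n-1})^{\oplus (n-2)}\oplus H^i(K_{n-1}^1).
\end{equation*}

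Next, to identify $H^i(K_{n-1}^1)$ with $H^i(K_{n-1})\{1\}$, note that in $G=K_{n-1}^1$ the edge $e=\{1,n\}$ is a pendant bridge with $G/e=K_{n-1}$ and $G-e=K_{n-1}\sqcup\{n\}$. The disjoint-union structure of the chromatic complex gives $H^i(G-e)=H^i(K_{n-1})\otimes \mathcal{A}_2=H^i(K_{n-1})\oplus H^i(K_{n-1})\{1\}$, the second summand recording the color $x$ on the isolated vertex $n$. Tracing the connecting homomorphism $\gamma^*$ through the snake lemma, for $y=S\otimes c$ with $c\in\{1,x\}$ at vertex $n$ the canonical lift $\tilde y\in C^i(G)$ satisfies $d_G\tilde y=\pm\alpha(c\cdot_1 S)$, where $c\cdot_1 S$ denotes multiplication by $c$ on the color of the component of $S$ containing vertex $1$. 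Hence $\gamma^*(S\otimes 1)=\pm[S]$ is surjective, which forces $\alpha^*$ to vanish at every degree and identifies $H^i(K_{n-1}^1)\cong\ker\gamma^*$ via $\beta^*$. Parametrizing elements of $\ker\gamma^*$ by $b\mapsto -x_1 b\otimes 1+b\otimes x$ (where $x_1$ is multiplication by $x$ at vertex $1$) gives an isomorphism $H^i(K_{n-1})\{1\}\xrightarrow{\sim}\ker\gamma^*$ of graded modules, the shift arising from the $\otimes x$ factor.

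Combining the two steps produces the recursion for $1\le i\le n-2$. For the boundary, $H^0(K_n)=\ZZ\{n\}$ is generated by the unique all-$x$ cocycle (Proposition~\ref{prop:generator}(I), together with $\ker\partial^{0,n-1}=0$ from Proposition~\ref{prop:generator2}(I)), while $H^i(K_n)=0$ for $i\ge n-1$ follows from Lemma~\ref{lem:quote} applied with $\mu=1$, forcing $i\le n-2$. The principal obstacle in this outline is the base-case identification $H^i(K_{n-1}^1)\cong H^i(K_{n-1})\{1\}$: one has to compute $\gamma^*$ explicitly via the snake lemma and verify that its kernel, despite the potentially complicated $x_1$-action on $H^*(K_{n-1})$, is indeed a graded shift of $H^i(K_{n-1})$ as a graded $\ZZ$-module.
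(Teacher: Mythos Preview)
Your proof is correct and follows the same strategy as the paper: iterate the split short exact sequence of Proposition~\ref{prop:main} down from $K_{n-1}^{n-1}$ to $K_{n-1}^1$, then identify $H^i(K_{n-1}^1)$ with $H^i(K_{n-1})\{1\}$. The only differences are that the paper simply cites the pendant-edge identity $H^i(K_{n-1}^1)\cong H^i(K_{n-1})\{1\}$ as a known fact rather than deriving it from the snake lemma as you do, and proves the vanishing for $i\ge n-1$ by an inductive run through the long exact sequence rather than reading it off directly from Lemma~\ref{lem:quote}; your arguments for both points are valid and more self-contained.
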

\begin{proof}
    To begin with, let us consider $H^0(K_n)$. By Proposition \ref{prop:generator} we can see that $H^0(K_n) \simeq \ker \left( \partial_{K_n}^{0}: C^0(K_n) \to C^1(K_n) \right)$ is generated by only $\overset{x}{P_1} \bl \dots \bl \overset{x}{P_j} \bl \dots \bl \overset{x}{P_n}$. Thus, we have $H^0 (K_n) =  \ZZ \{ n \}$. 
    
    The case $i \geq n-1$ follows by induction on $n$. Though it has been proved in \cite{H}, let us review the proof here. Let assume $H^i(K_{n-1}) = 0$ when $i \geq n-2$. Then, we show $H^i(K_{n}) = 0$ when $i \geq n-1$. \\
    By combing the assumption with the long exact sequence given in Theorem \ref{thm:exct_seq}, we have
    \begin{align*}
        0 \to H^i(K_{n-1}^{m-1}) \to H^i(K_{n-1}^m) \to 0
    \end{align*}
    for $i \geq n-1$, $1 \leq m \leq n-1$. 
    
    Thus, we have
    \begin{align*}
        H^i(K_n) = H^i(K_{n-1}^{n-1}) \simeq H^i(K_{n-1}^{n-2}) \simeq \dots \simeq H^i(K_{n-1}^{1}) \simeq H^i(K_{n-1})\{ 1 \} = 0 \quad \text{for $i \geq n-1$}.
    \end{align*}
    Finally, let us show the case $1 \leq i \leq n-2$. By Proposition \ref{prop:main} we can write $H^i(K_n)$ as follows:
    \begin{align*}
        H^i(K_n) = H^i (K_{n-1}^{n-1}) & = H^{i-1} (K_{n-1}) \oplus H^i (K_{n-1}^{n-2}) \\ 
        H^i (K_{n-1}^{n-2})            & = H^{i-1} (K_{n-1}) \oplus H^i (K_{n-1}^{n-3}) \\
        H^i (K_{n-1}^{n-3})            & = H^{i-1} (K_{n-1}) \oplus H^i (K_{n-1}^{n-4}) \\
        \cdots\cdots                   & \cdots\cdots\cdots\cdots\cdots\cdots\cdots     \\
        H^i (K_{n-1}^2)                & = H^{i-1} (K_{n-1}) \oplus H^i (K_{n-1}^1).
    \end{align*}
    Remark that since $K_{n-1}^{1}$ is a graph obtained by adding a pendant edge, which is an edge of $G$ such that one of the end vertices has no edges, to the complete graph $K_{n-1}$, we have $H^i (K_{n-1}^1) \simeq H^i (K_{n-1})\{ 1 \}$, and thus we obtain
    \begin{align*}
        H^{i-1} (K_{n-1})^{\oplus (n-2)} \oplus H^i(K_{n-1}) \{ 1 \},
    \end{align*}
    which completes the proof.
\end{proof}

\begin{rem}
    Theorem \ref{thm:main} also gives the characteristic homology, introduced in \cite{DL}, of the braid arrangement, which would be the first result for the explicit calculation of the homology.
\end{rem}
\begin{rem}
    It would be the further study to determine which graph $G$ and edge $e$ gives the splitting property of the chromatic homology for all $i$. It would be also interesting to generalize the Theorem \ref{thm:main_split} with the general algebra $\mathbb{Z}[x]/(x^m)$, i.e., the chromatic homology of graphs with colors in $\mathbb{Z}[x]/(x^m)$. Such a generalization might lead to further developments in the study of chromatic homology, including insights into the conjectures given by Pabiniak et al. \cite{PPS}.
\end{rem}

\textit{Acknowledgements:} The author would like to thank Toshiyuki Akita and Takuya Saito for  useful discussion, reading the draft manuscript and giving useful suggestions.


\end{document}